\pgfplotsset{compat=1.14}
\newtheorem{theorem}{Theorem}
\newtheorem{corollary}{Corollary}
\newtheorem{remark}{Remark}
\newtheorem{definition}{Definition}
\begin{document}

\title{The shape of the reliability polynomial\\ of a hammock network}

\author{Leonard  D\u au\c s$^{1}$, \; Marilena Jianu$^1$}
\date{} 
\maketitle
\begin{center}{\small
$^1$ Department of Mathematics and Computer Science, \\
Technical University of Civil Engineering of Bucharest, \\ 020396, Bd. Lacul Tei, 124, Bucharest, Romania
}\end{center}

\begin{abstract}
Motivated by the study of hammock (aka brick-wall) networks, we introduce in this paper the notion of X-path. Using the Jordan Curve Theorem for piecewise smooth curves, we prove duality properties for hammock networks. Consequences for reliability polynomials are given.  
\end{abstract}

\textit{Keywords:} networks, reliability polynomial, lattice paths, Jordan curve theorem

Mathematics Subject Classification (2010): 05C31, 05A99, 94Cxx, 68Rxx

\section{Introduction}

The concept of network reliability can be traced back to 1956, when Von Neumann \cite{N}, and Moore and Shannon \cite{MS}, respectively, published two prescient papers. The original purpose of Moore and Shannon was to understand the reliability of electrical circuits/networks  having unreliable individual components. In order to improve the reliability of such networks, they introduced a new type of reliability enhancement scheme called brick-wall (or hammock) network. In the last few years, the interest regarding the work of Moore and Shannon has been growing (see \cite{CDBP}, \cite{DCBHG}, \cite{NRKE}, \cite{NRE}), not only from theoretical point of view, but also because of its applicability in various fields ranging from biology/medicine to engineering and even social sciences.

The problem of finding the reliability polynomial of a network belongs to the class of $\#P-$complete problems, a class of computationally equivalent counting problems (introduced by Valiant in \cite{V1}) that are at least as difficult as the $NP-$complete problems (\cite{V2}, 
\cite{Co}). Although the brick-wall networks were proposed more than sixty years ago, for their reliability polynomials no general close form formula have been reported yet. Recently, in \cite{CDBP}, the reliability polynomials have been calculated exactly for a few particular cases of small size, more precisely for the 29 hammock networks presented by Moore and Shannon in their original paper. For completing this task, the authors used an algorithm based on a recursive depth-first traversal of a binary tree. Another important step was achieved in \cite{DBCP} where the first and second non-zero coefficients of the reliability polynomial have been computed, for any hammock network. The methods used to prove the formulas for these leading coefficients involve the transition matrix of certain linear transformations, lattice paths and generating functions.

The main goal of this paper is to propose, in Theorem \ref{t1}, a direct proof of duality properties for hammock networks. An important consequence is a significant reduction of requested calculus for finding reliability polynomials of all hammock networks. It should be noted that, while this paper studies an applied mathematical subject, it uses, as a key tool, the Jordan curve theorem, which is a pure mathematical result (in fact, it is the first theorem discovered in set-theoretic topology). 

The concept of brick-wall lattice path, introduced in \cite{DBCP}, has been proved to be a useful and versatile tool in the study of brick-wall networks. Here, when studying hammock networks, it is natural to define and to use the concept of $X-$path, which generates all possible connections through the network.  We refer the reader to \cite{Hu10} and \cite{St86} for more details about lattice paths, and to \cite{Co} for definitions and results about network reliability.

\section{The reliability polynomial of a network}

A network is a probabilistic graph \cite{Co}, $N=\left(V,E\right)$, where \textit{V} is the set of nodes (vertices) and \textit{E } is the set of (undirected) edges. The edges can be represented as independent identically distributed random variables: each edge operates (is closed) with probability \textit{p} and fails (is open) with probability $q=1-p$. We assume that nodes do not fail, hence the failure of the network is always a consequence of edge failures.

Let $K$ be a subset of $V$ containing some special nodes (called terminals). The \textit{K -} reliability of the network \textit{N} is the probability that there exists a path (a sequence of adjacent edges) made of operational (closed) edges between any pair of nodes in \textit{K}. This is a polynomial function in \textit{p} denoted by $h_{K} (p)$. If $K=V$ then $h_{K} (p)$ is called the all-terminal  reliability of the network. If the subset \textit{K} contains two nodes: \textit{S} (source/input) and \textit{T }(terminus/output) then $h_{K} (p)$ is called two-terminal   reliability. This paper studies the two-terminal reliability only and we denote it by $h(p)$.

A \textit{pathset} in the network \textit{N} is a subset of \textit{E }which contains a path between the nodes\textit{ S} and \textit{T}. A minimal pathset (\textit{minpath}) is a pathset \textit{P} such that, if any edge \textit{e }of \textit{P} is removed, then $P-\left\{e\right\}$ is no longer a pathset (the nodes \textit{S} and \textit{T} are disconnected). We denote by $\mathcal{P}$ the set of all the pathsets of \textit{N. }

A \textit{cutset} in the network \textit{N} is a subset of edges, $C\subset E$, such that the complementary set, $E-C$, contains no path between \textit{S} and \textit{T }($E-C$ is not a pathset). A minimal cutset (\textit{mincut}) is a cutset \textit{C} such that, if any edge \textit{e }of \textit{C} is removed, then $C-\left\{e\right\}$ is no longer a cutset ($E-C\bigcup \left\{e\right\}$ is a pathset). We denote by ${\mathcal C}$ the set of all the cutsets of \textit{N. }

If $n=\left|E\right|$ is the size of the graph, $N_{i} $ is the number of pathsets with exactly \textit{i }edges and  $C_{i} $, the number of cutsets with exactly \textit{i }edges, then the reliability of the network can be expressed as (see [1])
\begin{equation}\label{1}
 h(p)=\sum _{P\in \mathcal{P}}p^{|P|} q^{n-|P|}  =\sum _{i=1}^{n}N_{i} p^{i} (1-p)^{n-i},
\end{equation}
or, in terms of cutsets, as
\begin{equation}\label{2}
 h(p)=1-\sum _{C\in  {\mathcal C}}q^{|C|} p^{n-|C|}  =1-\sum _{i=1}^{n}C_{i} (1-p)^{i} p^{n-i} .
\end{equation}

\section{Hammock networks}

A brick-wall network is formed by $w\times l$ identical devices disposed in \textit{w} lines, each line consisting of \textit{l} devices connected in series. Besides the horizontal connections, there exist also vertical connections. Out of all $(l-1)(w-1)$ possible vertical connections, half are present and the other half are absent. The vertical connections are arranged regularly in an alternate way which gives rise to the ``brick-wall'' pattern shown in Fig. \ref{Fig 1}.

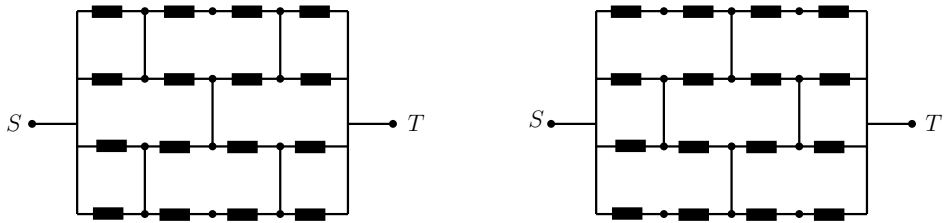
\begin{figure}[h]
\centering
\psscalebox{0.75} 
{
\begin{pspicture}(0,-1.91)(16.53,1.91)
\psline[linecolor=black, linewidth=0.04](1.26,-1.79)(6.06,-1.79)
\psline[linecolor=black, linewidth=0.04](1.26,-0.59)(6.06,-0.59)
\psline[linecolor=black, linewidth=0.04](1.26,0.61)(6.06,0.61)
\psline[linecolor=black, linewidth=0.04](1.26,-1.79)(1.26,1.81)
\psline[linecolor=black, linewidth=0.04](6.06,-1.79)(6.06,1.81)
\psline[linecolor=black, linewidth=0.04](1.26,-0.19)(0.46,-0.19)
\psline[linecolor=black, linewidth=0.04](6.06,-0.19)(6.86,-0.19)
\psdots[linecolor=black, dotsize=0.14](6.86,-0.19)
\psdots[linecolor=black, dotsize=0.14](0.46,-0.19)
\psframe[linecolor=black, linewidth=0.04, fillstyle=solid,fillcolor=black, dimen=outer](2.06,0.71)(1.52,0.49)
\psframe[linecolor=black, linewidth=0.04, fillstyle=solid,fillcolor=black, dimen=outer](2.08,-1.67)(1.54,-1.89)
\psframe[linecolor=black, linewidth=0.04, fillstyle=solid,fillcolor=black, dimen=outer](2.14,-0.47)(1.6,-0.69)
\psframe[linecolor=black, linewidth=0.04, fillstyle=solid,fillcolor=black, dimen=outer](5.76,0.71)(5.22,0.49)
\psframe[linecolor=black, linewidth=0.04, fillstyle=solid,fillcolor=black, dimen=outer](4.54,0.71)(4.0,0.49)
\psframe[linecolor=black, linewidth=0.04, fillstyle=solid,fillcolor=black, dimen=outer](3.34,0.71)(2.8,0.49)
\psdots[linecolor=black, dotsize=0.14](2.46,-0.59)
\psdots[linecolor=black, dotsize=0.14](2.46,0.61)
\psdots[linecolor=black, dotsize=0.14](3.66,-0.59)
\psdots[linecolor=black, dotsize=0.14](3.66,-1.79)
\psdots[linecolor=black, dotsize=0.14](4.86,-0.59)
\psdots[linecolor=black, dotsize=0.14](4.86,0.61)
\psdots[linecolor=black, dotsize=0.14](3.66,0.61)
\psdots[linecolor=black, dotsize=0.14](2.46,-1.79)
\psdots[linecolor=black, dotsize=0.14](4.86,-1.79)
\psframe[linecolor=black, linewidth=0.04, fillstyle=solid,fillcolor=black, dimen=outer](5.66,-0.49)(5.12,-0.71)
\psframe[linecolor=black, linewidth=0.04, fillstyle=solid,fillcolor=black, dimen=outer](3.26,-0.49)(2.72,-0.71)
\psframe[linecolor=black, linewidth=0.04, fillstyle=solid,fillcolor=black, dimen=outer](4.46,-0.49)(3.92,-0.71)
\psframe[linecolor=black, linewidth=0.04, fillstyle=solid,fillcolor=black, dimen=outer](3.26,-1.69)(2.72,-1.91)
\psframe[linecolor=black, linewidth=0.04, fillstyle=solid,fillcolor=black, dimen=outer](4.46,-1.69)(3.92,-1.91)
\psframe[linecolor=black, linewidth=0.04, fillstyle=solid,fillcolor=black, dimen=outer](5.66,-1.69)(5.12,-1.91)
\psline[linecolor=black, linewidth=0.04](1.26,1.81)(6.06,1.81)
\psframe[linecolor=black, linewidth=0.04, fillstyle=solid,fillcolor=black, dimen=outer](2.06,1.91)(1.52,1.69)
\psframe[linecolor=black, linewidth=0.04, fillstyle=solid,fillcolor=black, dimen=outer](4.54,1.91)(4.0,1.69)
\psframe[linecolor=black, linewidth=0.04, fillstyle=solid,fillcolor=black, dimen=outer](5.74,1.91)(5.2,1.69)
\psframe[linecolor=black, linewidth=0.04, fillstyle=solid,fillcolor=black, dimen=outer](3.34,1.91)(2.8,1.69)
\psdots[linecolor=black, dotsize=0.14](2.46,1.81)
\psdots[linecolor=black, dotsize=0.14](3.66,1.81)
\psdots[linecolor=black, dotsize=0.14](4.86,1.81)
\psline[linecolor=black, linewidth=0.04](2.46,-0.59)(2.46,-1.79)
\psline[linecolor=black, linewidth=0.04](4.86,-0.59)(4.86,-1.79)
\psline[linecolor=black, linewidth=0.04](3.66,0.61)(3.66,-0.59)
\psline[linecolor=black, linewidth=0.04](2.46,1.81)(2.46,0.61)
\psline[linecolor=black, linewidth=0.04](4.86,1.81)(4.86,0.61)
\rput[bl](0.0,-0.31){$S$}
\rput[bl](7.12,-0.31){$T$}
\psline[linecolor=black, linewidth=0.04](10.46,-1.79)(10.46,1.81)
\psline[linecolor=black, linewidth=0.04](10.46,1.81)(15.26,1.81)
\psframe[linecolor=black, linewidth=0.04, fillstyle=solid,fillcolor=black, dimen=outer](11.26,1.91)(10.72,1.69)
\psdots[linecolor=black, dotsize=0.14](11.66,1.81)
\psline[linecolor=black, linewidth=0.04](12.86,1.81)(12.86,0.61)
\psline[linecolor=black, linewidth=0.04](10.46,-1.79)(15.26,-1.79)
\psline[linecolor=black, linewidth=0.04](10.46,-0.59)(15.26,-0.59)
\psline[linecolor=black, linewidth=0.04](10.46,0.61)(15.26,0.61)
\psline[linecolor=black, linewidth=0.04](15.26,-1.79)(15.26,1.81)
\psline[linecolor=black, linewidth=0.04](10.46,-0.19)(9.66,-0.19)
\psline[linecolor=black, linewidth=0.04](15.26,-0.19)(16.06,-0.19)
\psdots[linecolor=black, dotsize=0.14](16.06,-0.19)
\psdots[linecolor=black, dotsize=0.14](9.66,-0.19)
\psframe[linecolor=black, linewidth=0.04, fillstyle=solid,fillcolor=black, dimen=outer](11.26,0.71)(10.72,0.49)
\psframe[linecolor=black, linewidth=0.04, fillstyle=solid,fillcolor=black, dimen=outer](11.28,-1.67)(10.74,-1.89)
\psframe[linecolor=black, linewidth=0.04, fillstyle=solid,fillcolor=black, dimen=outer](11.34,-0.47)(10.8,-0.69)
\psframe[linecolor=black, linewidth=0.04, fillstyle=solid,fillcolor=black, dimen=outer](14.96,0.71)(14.42,0.49)
\psframe[linecolor=black, linewidth=0.04, fillstyle=solid,fillcolor=black, dimen=outer](13.74,0.71)(13.2,0.49)
\psframe[linecolor=black, linewidth=0.04, fillstyle=solid,fillcolor=black, dimen=outer](12.54,0.71)(12.0,0.49)
\psdots[linecolor=black, dotsize=0.14](11.66,-0.59)
\psdots[linecolor=black, dotsize=0.14](11.66,0.61)
\psdots[linecolor=black, dotsize=0.14](12.86,-0.59)
\psdots[linecolor=black, dotsize=0.14](12.86,-1.79)
\psdots[linecolor=black, dotsize=0.14](14.06,-0.59)
\psdots[linecolor=black, dotsize=0.14](14.06,0.61)
\psdots[linecolor=black, dotsize=0.14](12.86,0.61)
\psdots[linecolor=black, dotsize=0.14](11.66,-1.79)
\psdots[linecolor=black, dotsize=0.14](14.06,-1.79)
\psframe[linecolor=black, linewidth=0.04, fillstyle=solid,fillcolor=black, dimen=outer](14.86,-0.49)(14.32,-0.71)
\psframe[linecolor=black, linewidth=0.04, fillstyle=solid,fillcolor=black, dimen=outer](12.46,-0.49)(11.92,-0.71)
\psframe[linecolor=black, linewidth=0.04, fillstyle=solid,fillcolor=black, dimen=outer](13.66,-0.49)(13.12,-0.71)
\psframe[linecolor=black, linewidth=0.04, fillstyle=solid,fillcolor=black, dimen=outer](12.46,-1.69)(11.92,-1.91)
\psframe[linecolor=black, linewidth=0.04, fillstyle=solid,fillcolor=black, dimen=outer](13.66,-1.69)(13.12,-1.91)
\psframe[linecolor=black, linewidth=0.04, fillstyle=solid,fillcolor=black, dimen=outer](14.86,-1.69)(14.32,-1.91)
\psframe[linecolor=black, linewidth=0.04, fillstyle=solid,fillcolor=black, dimen=outer](13.74,1.91)(13.2,1.69)
\psframe[linecolor=black, linewidth=0.04, fillstyle=solid,fillcolor=black, dimen=outer](14.94,1.91)(14.4,1.69)
\psframe[linecolor=black, linewidth=0.04, fillstyle=solid,fillcolor=black, dimen=outer](12.54,1.91)(12.0,1.69)
\psdots[linecolor=black, dotsize=0.14](12.86,1.81)
\psdots[linecolor=black, dotsize=0.14](14.06,1.81)
\psline[linecolor=black, linewidth=0.04](11.66,0.61)(11.66,-0.59)
\psline[linecolor=black, linewidth=0.04](14.06,0.61)(14.06,-0.59)
\psline[linecolor=black, linewidth=0.04](12.86,-0.59)(12.86,-1.79)
\rput[bl](9.26,-0.25){$S$}
\rput[bl](16.3,-0.31){$T$}
\end{pspicture}
}
	\caption{Brick-wall networks of dimensions $w=4$, $l=4$}
	\label{Fig 1}
\end{figure}


Brick-wall networks were also named hammock networks by Moore and Shannon, from their appearance when the nodes \textit{S} and \textit{T} are pulled apart and every vertical connection collapses into a node. In this case, rectangular ``bricks'' deform into rhombs. As can be seen from Fig. \ref{Fig 2}, the ``hammock'' representation fits the above definition of the probabilistic graph, unlike the ``brick-wall'' representation, where the vertical edges have no probability assigned to them (i.e., it is assumed they are always closed). 

In Fig. \ref{Fig 2} a brick-wall network with $w=3$, $l=7$ and the equivalent hammock network are presented. Notice that, in order to preserve the regularity of the hammock network, the nodes \textit{S} and \textit{T} can be replaced by ``fictive'' nodes, $S_{1} ,S_{2} ,\ldots,S_k $, and, respectively, $T_{1} ,T_{2} ,\ldots,T_h $, where $k,h\in\left\{\left\lfloor \frac{w}{2}\right\rfloor, \left\lfloor \frac{w}{2}\right\rfloor+1\right\}$.

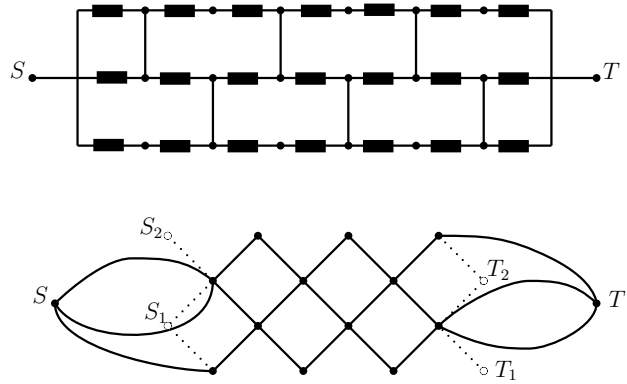
\begin{figure}[h]\centering
\psscalebox{0.75} 
{
\begin{pspicture}(0,-3.34)(10.87,3.34)
\psline[linecolor=black, linewidth=0.04, linestyle=dotted, dotsep=0.10583334cm](3.6,-1.58)(2.8,-0.78)
\psline[linecolor=black, linewidth=0.04, linestyle=dotted, dotsep=0.10583334cm](3.6,-1.58)(2.8,-2.38)(3.6,-3.18)
\psline[linecolor=black, linewidth=0.04, linestyle=dotted, dotsep=0.10583334cm](7.6,-0.78)(8.4,-1.58)(7.6,-2.38)(8.4,-3.18)
\psbezier[linecolor=black, linewidth=0.04](0.8,-1.98)(0.8,-1.98)(1.6,-1.18)(2.16,-1.18)(2.72,-1.18)(3.12,-1.18)(3.6,-1.58)
\psbezier[linecolor=black, linewidth=0.04](0.8,-1.98)(1.2666667,-2.78)(3.6,-2.78)(3.6,-1.58)
\psbezier[linecolor=black, linewidth=0.04](0.8,-1.98)(0.8,-2.78)(2.8,-3.18)(3.6,-3.18)
\psline[linecolor=black, linewidth=0.04](3.6,-1.58)(4.4,-0.78)(5.2,-1.58)(6.0,-0.78)(6.8,-1.58)(7.6,-0.78)
\psline[linecolor=black, linewidth=0.04](3.6,-1.58)(4.4,-2.38)(5.2,-1.58)(6.0,-2.38)(6.8,-1.58)(7.6,-2.38)
\psline[linecolor=black, linewidth=0.04](3.6,-3.18)(4.4,-2.38)(5.2,-3.18)(6.0,-2.38)(6.8,-3.18)(7.6,-2.38)
\psdots[linecolor=black, dotsize=0.14](0.8,-1.98)
\psdots[linecolor=black, dotsize=0.14](3.6,-1.58)
\psdots[linecolor=black, dotsize=0.14](4.4,-0.78)
\psdots[linecolor=black, dotsize=0.14](5.2,-1.58)
\psdots[linecolor=black, dotsize=0.14](4.4,-2.38)
\psdots[linecolor=black, dotsize=0.14](3.6,-3.18)
\psdots[linecolor=black, dotsize=0.14](5.2,-3.18)
\psdots[linecolor=black, dotsize=0.14](6.0,-0.78)
\psdots[linecolor=black, dotsize=0.14](6.0,-2.38)
\psdots[linecolor=black, dotsize=0.14](6.8,-1.58)
\psdots[linecolor=black, dotsize=0.14](6.8,-3.18)
\psdots[linecolor=black, dotsize=0.14](7.6,-2.38)
\psdots[linecolor=black, dotsize=0.14](7.6,-0.78)
\psdots[linecolor=black, fillstyle=solid, dotstyle=o, dotsize=0.14, fillcolor=white](2.8,-0.78)
\psdots[linecolor=black, fillstyle=solid, dotstyle=o, dotsize=0.14, fillcolor=white](2.8,-2.38)
\psdots[linecolor=black, fillstyle=solid, dotstyle=o, dotsize=0.14, fillcolor=white](8.4,-1.58)
\psdots[linecolor=black, fillstyle=solid, dotstyle=o, dotsize=0.14, fillcolor=white](8.4,-3.18)
\psbezier[linecolor=black, linewidth=0.04](7.6,-0.78)(8.4,-0.78)(8.8,-0.78)(9.6,-1.18)(10.4,-1.58)(10.4,-1.98)(10.4,-1.98)
\psbezier[linecolor=black, linewidth=0.04](7.6,-2.38)(8.4,-2.78)(8.8,-2.78)(9.2,-2.78)(9.6,-2.78)(10.4,-2.38)(10.4,-1.98)
\psbezier[linecolor=black, linewidth=0.04](7.6,-2.38)(8.0,-1.98)(8.8,-1.58)(9.2,-1.58)(9.6,-1.58)(10.0,-1.58)(10.4,-1.98)
\psdots[linecolor=black, dotsize=0.14](10.4,-1.98)
\rput[bl](0.4,-1.98){$S$}
\rput[bl](10.62,-2.04){$T$}
\rput[bl](2.3,-0.82){$S_2$}
\rput[bl](2.38,-2.32){$S_1$}
\rput[bl](8.46,-1.56){$T_2$}
\rput[bl](8.6,-3.34){$T_1$}
\psline[linecolor=black, linewidth=0.04](1.2,0.82)(9.6,0.82)
\psline[linecolor=black, linewidth=0.04](1.2,2.02)(9.6,2.02)
\psline[linecolor=black, linewidth=0.04](1.2,3.22)(9.6,3.22)
\psline[linecolor=black, linewidth=0.04](1.2,0.82)(1.2,3.22)
\psline[linecolor=black, linewidth=0.04](9.6,0.82)(9.6,3.22)
\psline[linecolor=black, linewidth=0.04](1.2,2.02)(0.4,2.02)
\psline[linecolor=black, linewidth=0.04](9.6,2.02)(10.4,2.02)
\psline[linecolor=black, linewidth=0.04](2.4,3.22)(2.4,2.02)
\psline[linecolor=black, linewidth=0.04](3.6,2.02)(3.6,0.82)
\psline[linecolor=black, linewidth=0.04](4.8,2.02)(4.8,3.22)
\psline[linecolor=black, linewidth=0.04](6.0,2.02)(6.0,0.82)
\psline[linecolor=black, linewidth=0.04](7.2,2.02)(7.2,3.22)
\psline[linecolor=black, linewidth=0.04](8.4,2.02)(8.4,0.82)
\psdots[linecolor=black, dotsize=0.14](10.4,2.02)
\psdots[linecolor=black, dotsize=0.14](0.4,2.02)
\psframe[linecolor=black, linewidth=0.04, fillstyle=solid,fillcolor=black, dimen=outer](2.0,3.32)(1.46,3.1)
\psframe[linecolor=black, linewidth=0.04, fillstyle=solid,fillcolor=black, dimen=outer](2.02,0.94)(1.48,0.72)
\psframe[linecolor=black, linewidth=0.04, fillstyle=solid,fillcolor=black, dimen=outer](2.08,2.14)(1.54,1.92)
\psframe[linecolor=black, linewidth=0.04, fillstyle=solid,fillcolor=black, dimen=outer](6.82,3.34)(6.28,3.12)
\psframe[linecolor=black, linewidth=0.04, fillstyle=solid,fillcolor=black, dimen=outer](5.7,3.32)(5.16,3.1)
\psframe[linecolor=black, linewidth=0.04, fillstyle=solid,fillcolor=black, dimen=outer](4.48,3.32)(3.94,3.1)
\psframe[linecolor=black, linewidth=0.04, fillstyle=solid,fillcolor=black, dimen=outer](3.28,3.32)(2.74,3.1)
\psdots[linecolor=black, dotsize=0.14](2.4,2.02)
\psdots[linecolor=black, dotsize=0.14](2.4,3.22)
\psdots[linecolor=black, dotsize=0.14](3.6,2.02)
\psdots[linecolor=black, dotsize=0.14](3.6,0.82)
\psdots[linecolor=black, dotsize=0.14](4.8,2.02)
\psdots[linecolor=black, dotsize=0.14](4.8,3.22)
\psdots[linecolor=black, dotsize=0.14](6.0,0.82)
\psdots[linecolor=black, dotsize=0.14](6.0,2.02)
\psdots[linecolor=black, dotsize=0.14](3.6,3.22)
\psdots[linecolor=black, dotsize=0.14](6.0,3.22)
\psdots[linecolor=black, dotsize=0.14](2.4,0.82)
\psdots[linecolor=black, dotsize=0.14](4.8,0.82)
\psdots[linecolor=black, dotsize=0.14](7.2,0.82)
\psdots[linecolor=black, dotsize=0.14](8.4,3.22)
\psdots[linecolor=black, dotsize=0.14](7.2,3.22)
\psdots[linecolor=black, dotsize=0.14](7.2,2.02)
\psdots[linecolor=black, dotsize=0.14](8.4,2.02)
\psdots[linecolor=black, dotsize=0.14](8.4,0.82)
\psframe[linecolor=black, linewidth=0.04, fillstyle=solid,fillcolor=black, dimen=outer](6.8,2.12)(6.26,1.9)
\psframe[linecolor=black, linewidth=0.04, fillstyle=solid,fillcolor=black, dimen=outer](5.6,2.12)(5.06,1.9)
\psframe[linecolor=black, linewidth=0.04, fillstyle=solid,fillcolor=black, dimen=outer](3.2,2.12)(2.66,1.9)
\psframe[linecolor=black, linewidth=0.04, fillstyle=solid,fillcolor=black, dimen=outer](4.4,2.12)(3.86,1.9)
\psframe[linecolor=black, linewidth=0.04, fillstyle=solid,fillcolor=black, dimen=outer](3.2,0.92)(2.66,0.7)
\psframe[linecolor=black, linewidth=0.04, fillstyle=solid,fillcolor=black, dimen=outer](4.4,0.92)(3.86,0.7)
\psframe[linecolor=black, linewidth=0.04, fillstyle=solid,fillcolor=black, dimen=outer](8.0,3.32)(7.46,3.1)
\psframe[linecolor=black, linewidth=0.04, fillstyle=solid,fillcolor=black, dimen=outer](5.6,0.92)(5.06,0.7)
\psframe[linecolor=black, linewidth=0.04, fillstyle=solid,fillcolor=black, dimen=outer](6.8,0.92)(6.26,0.7)
\psframe[linecolor=black, linewidth=0.04, fillstyle=solid,fillcolor=black, dimen=outer](8.0,0.92)(7.46,0.7)
\psframe[linecolor=black, linewidth=0.04, fillstyle=solid,fillcolor=black, dimen=outer](8.0,2.12)(7.46,1.9)
\psframe[linecolor=black, linewidth=0.04, fillstyle=solid,fillcolor=black, dimen=outer](9.2,0.92)(8.66,0.7)
\psframe[linecolor=black, linewidth=0.04, fillstyle=solid,fillcolor=black, dimen=outer](9.2,2.12)(8.66,1.9)
\psframe[linecolor=black, linewidth=0.04, fillstyle=solid,fillcolor=black, dimen=outer](9.2,3.32)(8.66,3.1)
\rput[bl](0.0,2.02){$S$}
\rput[bl](10.52,2.0){$T$}
\end{pspicture}
}
\caption{Hammock network of dimensions $w=3$, $l=7$}
	\label{Fig 2}
\end{figure}

\begin{definition} Let $S\subset {\bf {\mathbb Z}}^{2} $. A lattice path with steps in \textit{S} is a sequence of lattice points, $v_{0} ,v_{1} ,\ldots ,v_{k} \in {\bf {\mathbb Z}}^{2} $, such that $v_{i} -v_{i-1} \in S$ for all $i=0,1,\ldots ,k$.
\end{definition}

\begin{figure}[h]\centering
\psscalebox{0.75} 
{
\begin{pspicture}(0,-3.3534791)(7.975949,3.3534791)
\psline[linecolor=black, linewidth=0.04, arrowsize=0.05291667cm 2.0,arrowlength=1.4,arrowinset=0.0]{->}(0.06899033,-1.7534791)(8.068991,-1.7534791)
\psline[linecolor=black, linewidth=0.04, arrowsize=0.05291667cm 2.0,arrowlength=1.4,arrowinset=0.0]{->}(0.8689903,-2.5534792)(0.8689903,3.4465208)
\psdots[linecolor=black, dotsize=0.14](1.6689904,-1.7534791)
\psdots[linecolor=black, dotsize=0.14](2.4689903,-0.9534792)
\psdots[linecolor=black, dotsize=0.14](1.6689904,-0.15347916)
\psdots[linecolor=black, dotsize=0.14](2.4689903,0.64652085)
\psdots[linecolor=black, dotsize=0.14](3.2689903,-0.15347916)
\psdots[linecolor=black, dotsize=0.14](4.06899,-0.9534792)
\psdots[linecolor=black, dotsize=0.14](4.8689904,-0.15347916)
\psdots[linecolor=black, dotsize=0.14](5.66899,0.64652085)
\psdots[linecolor=black, dotsize=0.14](6.4689903,-0.15347916)
\psdots[linecolor=black, dotsize=0.14](0.8689903,-0.9534792)
\psdots[linecolor=black, dotsize=0.14](0.8689903,0.64652085)
\psdots[linecolor=black, dotsize=0.14](1.6689904,1.4465208)
\psdots[linecolor=black, dotsize=0.14](3.2689903,1.4465208)
\psdots[linecolor=black, dotsize=0.14](4.06899,0.64652085)
\psdots[linecolor=black, dotsize=0.14](4.8689904,1.4465208)
\psdots[linecolor=black, dotsize=0.14](2.4689903,-2.5534792)
\psdots[linecolor=black, dotsize=0.14](5.66899,-0.9534792)
\psdots[linecolor=black, dotsize=0.14](4.8689904,-1.7534791)
\psdots[linecolor=black, dotsize=0.14](3.2689903,-1.7534791)
\psdots[linecolor=black, dotsize=0.14](6.4689903,-1.7534791)
\psdots[linecolor=black, dotsize=0.14](0.06899033,1.4465208)
\psdots[linecolor=black, dotsize=0.14](0.06899033,-0.15347916)
\psdots[linecolor=black, dotsize=0.14](0.06899033,-1.7534791)
\psdots[linecolor=black, dotsize=0.14](0.8689903,-2.5534792)
\rput[bl](7.66899,-2.153479){$x$}
\rput[bl](0.46899033,3.046521){$y$}
\psdots[linecolor=black, dotsize=0.14](1.6689904,-0.9534792)
\psdots[linecolor=black, dotsize=0.14](0.8689903,-1.7534791)
\psdots[linecolor=black, dotsize=0.14](0.8689903,-0.15347916)
\psdots[linecolor=black, dotsize=0.14](0.8689903,1.4465208)
\psdots[linecolor=black, dotsize=0.14](0.06899033,0.64652085)
\psdots[linecolor=black, dotsize=0.14](1.6689904,0.64652085)
\psdots[linecolor=black, dotsize=0.14](2.4689903,-0.15347916)
\psdots[linecolor=black, dotsize=0.14](2.4689903,1.4465208)
\psdots[linecolor=black, dotsize=0.14](3.2689903,0.64652085)
\psdots[linecolor=black, dotsize=0.14](0.06899033,-0.9534792)
\psdots[linecolor=black, dotsize=0.14](4.06899,1.4465208)
\psdots[linecolor=black, dotsize=0.14](0.06899033,-2.5534792)
\psdots[linecolor=black, dotsize=0.14](5.66899,1.4465208)
\psdots[linecolor=black, dotsize=0.14](1.6689904,-2.5534792)
\psdots[linecolor=black, dotsize=0.14](6.4689903,0.64652085)
\psdots[linecolor=black, dotsize=0.14](4.8689904,0.64652085)
\psdots[linecolor=black, dotsize=0.14](4.06899,-0.15347916)
\psdots[linecolor=black, dotsize=0.14](3.2689903,-0.9534792)
\psdots[linecolor=black, dotsize=0.14](2.4689903,-1.7534791)
\psdots[linecolor=black, dotsize=0.14](4.06899,-1.7534791)
\psdots[linecolor=black, dotsize=0.14](4.8689904,-0.9534792)
\psdots[linecolor=black, dotsize=0.14](5.66899,-0.15347916)
\psdots[linecolor=black, dotsize=0.14](5.66899,-1.7534791)
\psdots[linecolor=black, dotsize=0.14](6.4689903,-0.9534792)
\rput[bl](0.46899033,-2.153479){$O$}
\psline[linecolor=black, linewidth=0.04](1.6689904,-0.15347916)(2.4689903,-0.9534792)(3.2689903,-1.7534791)(4.06899,-0.9534792)(3.2689903,-0.15347916)(4.06899,0.64652085)
\psdots[linecolor=black, dotsize=0.14](3.2689903,-2.5534792)
\psdots[linecolor=black, dotsize=0.14](4.06899,-2.5534792)
\psdots[linecolor=black, dotsize=0.14](4.8689904,-2.5534792)
\psdots[linecolor=black, dotsize=0.14](5.66899,-2.5534792)
\psdots[linecolor=black, dotsize=0.14](6.4689903,-2.5534792)
\psdots[linecolor=black, dotsize=0.14](6.4689903,1.4465208)
\psdots[linecolor=black, dotsize=0.14](0.06899033,2.2465208)
\psdots[linecolor=black, dotsize=0.14](0.8689903,2.2465208)
\psdots[linecolor=black, dotsize=0.14](1.6689904,2.2465208)
\psdots[linecolor=black, dotsize=0.14](2.4689903,2.2465208)
\psdots[linecolor=black, dotsize=0.14](3.2689903,2.2465208)
\psdots[linecolor=black, dotsize=0.14](4.06899,2.2465208)
\psdots[linecolor=black, dotsize=0.14](4.8689904,2.2465208)
\psdots[linecolor=black, dotsize=0.14](5.66899,2.2465208)
\psdots[linecolor=black, dotsize=0.14](6.4689903,2.2465208)
\end{pspicture}
}
\caption{The \textbf{X} -- path $(1,2),(2,1),(3,0),(4,1),(3,2),(4,3)$}
	\label{Fig 3}
\end{figure}
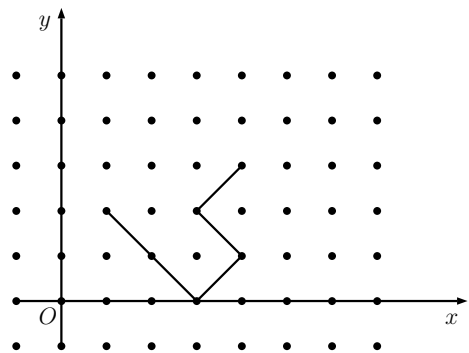

\begin{definition} An \textbf{X}-path is a lattice path  $v_{0} ,v_{1} ,\ldots ,v_{k} $ with steps in the set
\[S=\{ (1,1),(-1,1),(1,-1),(-1,-1)\} ,\] 
such that $v_{i} \ne v_{j} \; ,\; \forall i\ne j$.
\end{definition}
In particular, we consider an \textbf{X}-path to be the set of \textit{k} line segments connecting the points $v_{0} ,v_{1} ,\ldots ,v_{k} $. 

As can be understood, from a lattice point $(x,y)$ it is allowed to move in four directions and reach one of the four neighboring points $(x+1,y+1)$, $(x-1,y+1)$, $(x+1,y-1)$ and $(x-1,y-1)$. If $(x,y)$ is a starting point then any direction may be chosen, if not, we must take into account that $v_{i} \ne v_{j} \; ,\; \forall i\ne j$.

We notice that the sum of coordinates of any neighbor point has the same parity as $x+y$. We say that a lattice point $(x,y)$ is even (odd) if  $x+y$ is even (odd). An \textbf{X} -- path is even (odd) if it contains even (respectively, odd) points. For example, the \textbf{X} -- path represented in Fig. \ref{Fig 3} contains only odd points. 

Let $\mathcal{V}_{l,w} =\left\{A_{x,y} =(x,y)\in {\bf {\mathbb Z}}^{2} :0\le x\le l,\; 0\le y\le w\right\}$ be the set of all lattice points in the rectangle $[0,l]\times [0,w]$ and $V_{l,w}^{(1)} =\left\{A_{x,y} \in \mathcal{V}_{l,w} :x+y={\rm even}\right\}$, $V_{l,w}^{(2)} =\left\{A_{x,y} \in {\mathcal V}_{l,w} :x+y={\rm odd}\right\}$ be the subsets of even (respectively, odd) points in the rectangle $[0,l]\times [0,w]$. 

We denote by $\mathcal{E}_{l,w} =\left\{A_{x,y} A_{x',y'} :A_{x,y} ,A_{x',y'} \in \mathcal{V}_{l,w} ,\left|x-x'\right|=\left|y-y'\right|=1\right\}$ the set of all the line segments of length $\sqrt{2} $ connecting points of $\mathcal{V}_{l,w} $. Let $E_{l,w}^{(1)} =\left\{A_{x,y} A_{x',y'} \in {\mathcal E}_{l,w} :x+y={\rm even}\right\}$ be the subset of all even edges of $\mathcal{E}_{l,w}$ and let $E_{l,w}^{(2)} =\left\{A_{x,y} A_{x',y'} \in \mathcal{E}_{l,w} :x+y={\rm odd}\right\}$ be the subset of odd edges (the two disjoint subsets form a partition of $\mathcal{E}_{l,w} $).

A \textit{hammock network of the first kind }of dimensions $(l,w)$ is the probabilistic graph $H_{l,w}^{(1)} =\left(V_{l,w}^{(1)} ,E_{l,w}^{(1)} \right)$, while a \textit{hammock network of the second kind }is $H_{l,w}^{(2)} =\left(V_{l,w}^{(2)} ,E_{l,w}^{(2)} \right)$. We assume that each edge is closed with probability \textit{p} and open with probability $1-p$. The input (source) nodes are $S_{j} =A_{0,y} $ (with \textit{y} = even for the first kind and \textit{y} = odd for the second kind), and the output (terminus) nodes are $T_{k} =A_{l,z} $ (with $l+z$ = even, respectively, odd).

A subset of even (respectively, odd) edges $P\subset E_{l,w}^{(i)} $ is a \textit{pathset} in $H_{l,w}^{(i)} $ if it contains an \textbf{X -- }path connecting a source node $S_{j} $ with a target node $T_{k} $. Let $\mathcal{P}_{l,w}^{(i)} $ be the set of all pathsets in $H_{l,w}^{(i)} $. A subset $C\subset E_{l,w}^{(i)} $ is a \textit{cutset} in $H_{l,w}^{(i)} $ if $E_{l,w}^{(i)} -C$ contains no \textbf{X -- }path connecting a source node $S_{j} $ with a terminus node $T_{k} $. Let $\mathcal{C}_{l,w}^{(i)} $ be the set of all cutsets in $H_{l,w}^{(i)} $. By using these notations in formulas \eqref{1} and \eqref{2}, the reliability polynomials of hammock networks of the first and of second type, $h_{l,w}^{(1)} (p)$ and $h_{l,w}^{(2)} (p)$, can be written:
\begin{equation} \label{3} h_{l,w}^{(i)} (p)=\sum _{P\in \mathcal{P}_{l,w}^{(i)} }p^{|P|} (1-p)^{lw-|P|}  =1-\sum _{C\in \mathcal{C}_{l,w}^{(i)} }(1-p)^{|C|} p^{lw-|C|}  ,\; \, i=1,2 \end{equation}

\begin{figure}[h]\centering
\psscalebox{0.75} 
{
\begin{pspicture}(0,-2.6234791)(12.306958,2.6234791)
\psline[linecolor=black, linewidth=0.04, arrowsize=0.05291667cm 2.0,arrowlength=1.4,arrowinset=0.0]{->}(0.0,-1.6834792)(5.2,-1.6834792)
\psline[linecolor=black, linewidth=0.04, arrowsize=0.05291667cm 2.0,arrowlength=1.4,arrowinset=0.0]{->}(0.4,-2.0834792)(0.4,2.7165208)
\psdots[linecolor=black, dotsize=0.14](0.4,-1.6834792)
\psdots[linecolor=black, dotsize=0.14](1.2,-0.8834792)
\psdots[linecolor=black, dotsize=0.14](2.0,-1.6834792)
\psdots[linecolor=black, dotsize=0.14](2.8,-0.8834792)
\psdots[linecolor=black, dotsize=0.14](3.6,-1.6834792)
\psdots[linecolor=black, dotsize=0.14](0.4,-0.08347916)
\psdots[linecolor=black, dotsize=0.14](1.2,0.71652085)
\psdots[linecolor=black, dotsize=0.14](0.4,1.5165209)
\psdots[linecolor=black, dotsize=0.14](2.0,1.5165209)
\psdots[linecolor=black, dotsize=0.14](2.0,-0.08347916)
\psdots[linecolor=black, dotsize=0.14](3.6,-0.08347916)
\psdots[linecolor=black, dotsize=0.14](2.8,0.71652085)
\psdots[linecolor=black, dotsize=0.14](3.6,1.5165209)
\psline[linecolor=black, linewidth=0.04](0.4,-1.6834792)(1.2,-0.8834792)(2.0,-1.6834792)(2.8,-0.8834792)(3.6,-1.6834792)
\psline[linecolor=black, linewidth=0.04](0.4,-0.08347916)(1.2,-0.8834792)(2.0,-0.08347916)(2.8,-0.8834792)(3.6,-0.08347916)(2.8,0.71652085)(2.0,-0.08347916)(1.2,0.71652085)(0.4,-0.08347916)
\psline[linecolor=black, linewidth=0.04](0.4,1.5165209)(1.2,0.71652085)(2.0,1.5165209)(2.8,0.71652085)(3.6,1.5165209)
\rput[bl](0.0,-2.0834792){$O$}
\rput[bl](4.8,-2.0834792){$x$}
\rput[bl](0.0,2.316521){$y$}
\psline[linecolor=black, linewidth=0.04, arrowsize=0.05291667cm 2.0,arrowlength=1.4,arrowinset=0.0]{->}(7.2,-1.6834792)(12.4,-1.6834792)
\psline[linecolor=black, linewidth=0.04, arrowsize=0.05291667cm 2.0,arrowlength=1.4,arrowinset=0.0]{->}(7.6,-2.0834792)(7.6,2.7165208)
\psdots[linecolor=black, dotsize=0.14](7.6,-0.8834792)
\psdots[linecolor=black, dotsize=0.14](8.4,-1.6834792)
\psdots[linecolor=black, dotsize=0.14](9.2,-0.8834792)
\psdots[linecolor=black, dotsize=0.14](10.0,-1.6834792)
\psdots[linecolor=black, dotsize=0.14](8.4,1.5165209)
\psdots[linecolor=black, dotsize=0.14](7.6,0.71652085)
\psdots[linecolor=black, dotsize=0.14](10.0,1.5165209)
\psdots[linecolor=black, dotsize=0.14](8.4,-0.08347916)
\psdots[linecolor=black, dotsize=0.14](10.8,-0.8834792)
\psdots[linecolor=black, dotsize=0.14](10.0,-0.08347916)
\psdots[linecolor=black, dotsize=0.14](10.8,0.71652085)
\rput[bl](7.2,-2.0834792){$O$}
\rput[bl](12.0,-2.0834792){$x$}
\rput[bl](7.2,2.316521){$y$}
\psdots[linecolor=black, dotsize=0.14](9.2,0.71652085)
\psline[linecolor=black, linewidth=0.04](7.6,-0.8834792)(8.4,-1.6834792)(9.2,-0.8834792)(10.0,-1.6834792)(10.8,-0.8834792)(10.0,-0.08347916)(9.2,-0.8834792)(8.4,-0.08347916)(7.6,-0.8834792)
\psline[linecolor=black, linewidth=0.04](7.6,0.71652085)(8.4,-0.08347916)(9.2,0.71652085)(10.0,-0.08347916)(10.8,0.71652085)(10.0,1.5165209)(9.2,0.71652085)(8.4,1.5165209)(7.6,0.71652085)
\rput[bl](2.0,-2.0634792){2}
\rput[bl](3.6,-2.0834792){$4$}
\rput[bl](0.0,-0.20347916){$2$}
\rput[bl](0.0,1.3765209){$4$}
\rput[bl](8.4,-2.0834792){$1$}
\rput[bl](10.0,-2.0834792){$3$}
\rput[bl](7.2,-0.8834792){$1$}
\rput[bl](7.2,0.71652085){$3$}
\rput[bl](2.4,-2.6234791){(a)}
\rput[bl](9.2,-2.6034791){(b)}
\end{pspicture}
}
\caption{Hammock networks of the first kind (a) and of the second kind (b)}
	\label{Fig 4}
\end{figure}
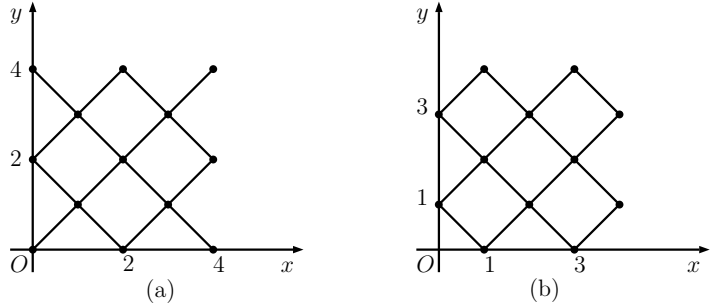

\begin{remark}\label{r1} {\rm If \textit{l }= odd or \textit{w} = odd, then the hammock networks $H_{l,w}^{(1)} $ and $H_{l,w}^{(2)} $ are isomorphic and the reliability polynomials are identical: $h_{l,w}^{(1)} =h_{l,w}^{(2)} $. If \textit{l} and \textit{w} are both even numbers, then we have two different networks of dimensions $(l,w)$: $h_{l,w}^{(1)} \ne h_{l,w}^{(2)} $.  For example, in Fig. \ref{Fig 4} the hammock networks of the first type and second type of dimensions $w=l=4$ are represented (corresponding to the brick-wall networks from Fig. \ref{Fig 1}). }\end{remark}

\section{Dual network}

For every edge $e\in {\rm {\mathcal E}}_{l,w} $, $e=A_{x,y} A_{x+1,y\pm 1} $, we denote by $\bar{e}=A_{x+1,y} A_{x,y\pm 1} $ its complementary edge (the edge that \textit{cuts e}). It can be seen that the complementary edge of an even edge is odd and the complementary edge of an odd edge is even. Thus, if $e\in E_{l.w}^{(i)} $, then $\bar{e}\in \overline{E_{l,w}^{(i)} }={\rm {\mathcal E}}_{l,w} -E_{l,w}^{(i)} =E_{l,w}^{(2/i)} $. By using the notation $\overline{V_{l,w}^{(i)} }={\rm {\mathcal V}}_{l,w} -V_{l,w}^{(i)} =V_{l,w}^{(2/i)} $, the dual network of $H_{l,w}^{(i)} =\left(V_{l,w}^{(i)} ,E_{l,w}^{(i)} \right)$ is $\overline{H_{l,w}^{(i)} }=\left(\overline{V_{l,w}^{(i)} },\overline{E_{l,w}^{(i)} }\right)$ with the source nodes $S'_{j} =A_{x,0} \in \overline{V_{l,w}^{(i)} }$ and the terminus nodes $T'_{k} =A_{z,w} \in \overline{V_{l,w}^{(i)} }$ (see Fig. \ref{Fig 5}). The probability of an edge $\bar{e}\in \overline{E_{l,w}^{(i)} }$ being closed is the probability of the edge $e\in E_{l,w}^{(i)} $ being open (``cut''): $q=1-p$. 

\begin{remark}\label{r2} {\rm Since for every edge $e\in {\rm {\mathcal E}}_{l,w} $, $\bar{\bar{e}}=e$ it follows that $\overline{\overline{H_{l,w}^{(i)} }}=H_{l,w}^{(i)} $.}\end{remark}

\begin{remark}\label{r3} {\rm The networks $\overline{H_{l,w}^{(i)} }$ and $H_{w,l}^{(2/i)} $  are isomorphic (since they are symmetric with respect to the first bisectrix) and the reliability polynomial of the dual network $\overline{H_{l,w}^{(i)} }$ can be written
\begin{equation}\label{4}
 \overline{h_{l,w}^{(i)} }(p)=h_{w,l}^{(2/i)} (1-p). \end{equation} }\end{remark}

 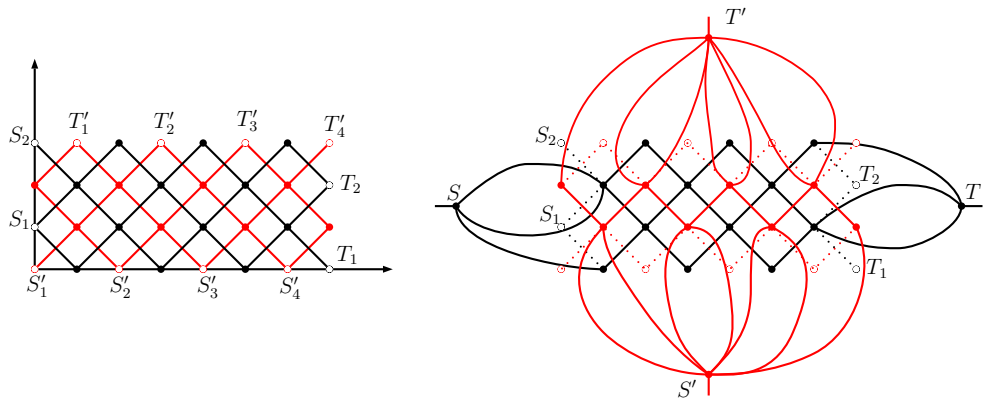
\begin{figure}[h]\centering
 \psscalebox{0.7} 
{
\begin{pspicture}(0,-3.67)(18.5,3.67)
\psline[linecolor=black, linewidth=0.04, linestyle=dotted, dotsep=0.10583334cm](11.3,0.35)(10.5,1.15)
\psline[linecolor=black, linewidth=0.04, linestyle=dotted, dotsep=0.10583334cm](11.3,0.35)(10.5,-0.45)(11.3,-1.25)
\psline[linecolor=black, linewidth=0.04, linestyle=dotted, dotsep=0.10583334cm](15.3,1.15)(16.1,0.35)(15.3,-0.45)(16.1,-1.25)
\psbezier[linecolor=black, linewidth=0.04](8.5,-0.05)(8.5,-0.05)(9.3,0.75)(9.86,0.75)(10.42,0.75)(10.82,0.75)(11.3,0.35)
\psbezier[linecolor=black, linewidth=0.04](8.5,-0.05)(8.966666,-0.85)(11.3,-0.85)(11.3,0.35)
\psbezier[linecolor=black, linewidth=0.04](8.5,-0.05)(8.5,-0.85)(10.5,-1.25)(11.3,-1.25)
\psline[linecolor=black, linewidth=0.04](11.3,0.35)(12.1,1.15)(12.9,0.35)(13.7,1.15)(14.5,0.35)(15.3,1.15)
\psline[linecolor=black, linewidth=0.04](11.3,0.35)(12.1,-0.45)(12.9,0.35)(13.7,-0.45)(14.5,0.35)(15.3,-0.45)
\psline[linecolor=black, linewidth=0.04](11.3,-1.25)(12.1,-0.45)(12.9,-1.25)(13.7,-0.45)(14.5,-1.25)(15.3,-0.45)
\psdots[linecolor=black, dotsize=0.14](8.5,-0.05)
\psdots[linecolor=black, dotsize=0.14](11.3,0.35)
\psdots[linecolor=black, dotsize=0.14](12.1,1.15)
\psdots[linecolor=black, dotsize=0.14](12.9,0.35)
\psdots[linecolor=black, dotsize=0.14](12.1,-0.45)
\psdots[linecolor=black, dotsize=0.14](11.3,-1.25)
\psdots[linecolor=black, dotsize=0.14](12.9,-1.25)
\psdots[linecolor=black, dotsize=0.14](13.7,1.15)
\psdots[linecolor=black, dotsize=0.14](13.7,-0.45)
\psdots[linecolor=black, dotsize=0.14](14.5,0.35)
\psdots[linecolor=black, dotsize=0.14](14.5,-1.25)
\psdots[linecolor=black, dotsize=0.14](15.3,-0.45)
\psdots[linecolor=black, dotsize=0.14](15.3,1.15)
\psdots[linecolor=black, fillstyle=solid, dotstyle=o, dotsize=0.14, fillcolor=white](10.5,1.15)
\psdots[linecolor=black, fillstyle=solid, dotstyle=o, dotsize=0.14, fillcolor=white](10.5,-0.45)
\psdots[linecolor=black, fillstyle=solid, dotstyle=o, dotsize=0.14, fillcolor=white](16.1,0.35)
\psdots[linecolor=black, fillstyle=solid, dotstyle=o, dotsize=0.14, fillcolor=white](16.1,-1.25)
\psbezier[linecolor=black, linewidth=0.04](15.3,1.15)(16.1,1.15)(16.5,1.15)(17.3,0.75)(18.1,0.35)(18.1,-0.05)(18.1,-0.05)
\psbezier[linecolor=black, linewidth=0.04](15.3,-0.45)(16.1,-0.85)(16.5,-0.85)(16.9,-0.85)(17.3,-0.85)(18.1,-0.45)(18.1,-0.05)
\psbezier[linecolor=black, linewidth=0.04](15.3,-0.45)(15.7,-0.05)(16.5,0.35)(16.9,0.35)(17.3,0.35)(17.7,0.35)(18.1,-0.05)
\psdots[linecolor=black, dotsize=0.14](18.1,-0.05)
\rput[bl](8.32,0.07){$S$}
\rput[bl](18.18,0.09){$T$}
\rput[bl](10.0,1.11){$S_2$}
\rput[bl](10.08,-0.39){$S_1$}
\rput[bl](16.16,0.37){$T_2$}
\rput[bl](16.3,-1.41){$T_1$}
\psdots[linecolor=red, dotsize=0.14](12.1,0.35)
\psdots[linecolor=red, dotsize=0.14](13.7,0.35)
\psdots[linecolor=red, dotsize=0.14](15.3,0.35)
\psdots[linecolor=red, dotsize=0.14](10.5,0.35)
\psdots[linecolor=red, dotsize=0.14](11.3,-0.45)
\psdots[linecolor=red, dotsize=0.14](12.9,-0.45)
\psdots[linecolor=red, dotsize=0.14](14.5,-0.45)
\psdots[linecolor=red, dotsize=0.14](16.1,-0.45)
\psline[linecolor=black, linewidth=0.04](8.5,-0.05)(8.1,-0.05)
\psdots[linecolor=red, fillstyle=solid, dotstyle=o, dotsize=0.14, fillcolor=white](10.5,-1.25)
\psdots[linecolor=red, fillstyle=solid, dotstyle=o, dotsize=0.14, fillcolor=white](12.1,-1.25)
\psdots[linecolor=red, fillstyle=solid, dotstyle=o, dotsize=0.14, fillcolor=white](13.7,-1.25)
\psdots[linecolor=red, fillstyle=solid, dotstyle=o, dotsize=0.14, fillcolor=white](15.3,-1.25)
\psdots[linecolor=red, fillstyle=solid, dotstyle=o, dotsize=0.14, fillcolor=white](14.5,1.15)
\psdots[linecolor=red, fillstyle=solid, dotstyle=o, dotsize=0.14, fillcolor=white](16.1,1.15)
\psdots[linecolor=red, fillstyle=solid, dotstyle=o, dotsize=0.14, fillcolor=white](12.9,1.15)
\psdots[linecolor=red, fillstyle=solid, dotstyle=o, dotsize=0.14, fillcolor=white](11.3,1.15)
\psline[linecolor=red, linewidth=0.04](10.5,0.35)(11.3,-0.45)(12.1,0.35)(12.9,-0.45)(13.7,0.35)(14.5,-0.45)(15.3,0.35)(16.1,-0.45)
\psline[linecolor=red, linewidth=0.04, linestyle=dotted, dotsep=0.10583334cm](10.5,0.35)(11.3,1.15)(12.1,0.35)(12.9,1.15)(13.7,0.35)(14.5,1.15)(15.3,0.35)(16.1,1.15)
\psline[linecolor=red, linewidth=0.04, linestyle=dotted, dotsep=0.10583334cm](10.5,-1.25)(11.3,-0.45)(12.1,-1.25)(12.9,-0.45)(13.7,-1.25)(14.5,-0.45)(15.3,-1.25)(16.1,-0.45)
\psbezier[linecolor=red, linewidth=0.036](10.5,0.35)(10.5,0.75)(10.592894,1.6428932)(11.3,2.35)(12.007107,3.0571067)(12.5,3.15)(13.3,3.15)
\psbezier[linecolor=red, linewidth=0.036](12.1,0.35)(11.7,0.35)(11.3,0.75)(11.7,1.55)(12.1,2.35)(12.9,2.75)(13.3,3.15)
\psbezier[linecolor=red, linewidth=0.036](12.1,0.35)(12.1,0.35)(12.506081,0.63085496)(12.9,1.55)(13.29392,2.469145)(13.3,3.15)(13.3,3.15)
\psbezier[linecolor=red, linewidth=0.036](13.7,0.35)(13.3,0.75)(13.4414215,0.5600505)(13.3,1.55)(13.158579,2.5399494)(13.3,2.75)(13.3,3.15)
\psbezier[linecolor=red, linewidth=0.036](13.7,0.35)(14.1,0.35)(14.1,0.75)(14.1,1.15)(14.1,1.55)(13.7,2.35)(13.3,3.15)
\psbezier[linecolor=red, linewidth=0.036](15.3,0.35)(14.9,0.35)(14.5,1.15)(14.5,1.15)(14.5,1.15)(13.7,2.75)(13.3,3.15)
\psbezier[linecolor=red, linewidth=0.036](15.3,0.35)(15.7,1.15)(15.7,1.15)(15.7,1.55)(15.7,1.95)(15.3,2.35)(15.3,2.35)(15.3,2.35)(14.5,3.15)(13.3,3.15)
\psdots[linecolor=red, dotsize=0.14](13.3,3.15)
\psline[linecolor=black, linewidth=0.04](18.1,-0.05)(18.5,-0.05)
\rput[bl](13.62,3.41){$T'$}
\psbezier[linecolor=red, linewidth=0.04](11.3,-0.45)(10.5,-1.25)(10.9,-2.05)(11.3,-2.45)(11.7,-2.85)(12.5,-3.25)(13.3,-3.25)
\psbezier[linecolor=red, linewidth=0.04](11.3,-0.45)(11.3,-0.85)(11.518762,-1.2362665)(12.1,-2.05)(12.681238,-2.8637335)(13.3,-3.25)(13.3,-3.25)
\psbezier[linecolor=red, linewidth=0.04](12.9,-0.45)(12.9,-0.45)(12.358579,-0.6600505)(12.5,-1.65)(12.641421,-2.6399496)(12.9,-2.85)(13.3,-3.25)
\psbezier[linecolor=red, linewidth=0.04](12.9,-0.45)(13.3,-0.45)(13.5585785,-0.6600505)(13.7,-1.65)(13.841421,-2.6399496)(13.7,-2.85)(13.3,-3.25)
\psbezier[linecolor=red, linewidth=0.04](14.5,-0.45)(14.1,-0.45)(14.1,-0.85)(14.1,-1.65)(14.1,-2.45)(13.7,-2.85)(13.3,-3.25)
\psbezier[linecolor=red, linewidth=0.04](14.5,-0.45)(15.3,-0.85)(15.29392,-1.530855)(14.9,-2.45)(14.506081,-3.369145)(13.7,-3.25)(13.3,-3.25)
\psbezier[linecolor=red, linewidth=0.04](16.1,-0.45)(16.5,-1.25)(16.1,-2.45)(15.3,-2.85)(14.5,-3.25)(13.7,-3.25)(13.3,-3.25)
\psdots[linecolor=red, dotsize=0.14](13.3,-3.25)
\rput[bl](12.7,-3.67){$S'$}
\psline[linecolor=red, linewidth=0.04](13.3,3.15)(13.3,3.55)
\psline[linecolor=red, linewidth=0.04](13.3,-3.25)(13.3,-3.65)
\psline[linecolor=black, linewidth=0.04, arrowsize=0.05291667cm 2.0,arrowlength=1.4,arrowinset=0.0]{->}(0.5,-1.25)(7.3,-1.25)
\psline[linecolor=black, linewidth=0.04, arrowsize=0.05291667cm 2.0,arrowlength=1.4,arrowinset=0.0]{->}(0.5,-1.25)(0.5,2.75)
\psdots[linecolor=red, dotsize=0.14](0.5,-1.25)
\psdots[linecolor=red, dotsize=0.14](1.3,-0.45)
\psdots[linecolor=red, dotsize=0.14](2.1,-1.25)
\psdots[linecolor=red, dotsize=0.14](2.9,-0.45)
\psdots[linecolor=red, dotsize=0.14](3.7,-1.25)
\psdots[linecolor=red, dotsize=0.14](4.5,-0.45)
\psdots[linecolor=red, dotsize=0.14](5.3,-1.25)
\psdots[linecolor=red, dotsize=0.14](5.3,0.35)
\psdots[linecolor=red, dotsize=0.14](4.5,1.15)
\psdots[linecolor=red, dotsize=0.14](3.7,0.35)
\psdots[linecolor=red, dotsize=0.14](2.1,0.35)
\psdots[linecolor=red, dotsize=0.14](0.5,0.35)
\psdots[linecolor=black, dotsize=0.14](0.5,-0.45)
\psdots[linecolor=black, dotsize=0.14](1.3,-1.25)
\psdots[linecolor=black, dotsize=0.14](2.1,-0.45)
\psdots[linecolor=black, dotsize=0.14](2.9,-1.25)
\psdots[linecolor=black, dotsize=0.14](3.7,-0.45)
\psdots[linecolor=black, dotsize=0.14](4.5,-1.25)
\psdots[linecolor=black, dotsize=0.14](5.3,-0.45)
\psdots[linecolor=black, dotsize=0.14](4.5,0.35)
\psdots[linecolor=black, dotsize=0.14](5.3,1.15)
\psdots[linecolor=black, dotsize=0.14](3.7,1.15)
\psdots[linecolor=black, dotsize=0.14](2.9,0.35)
\psdots[linecolor=black, dotsize=0.14](2.1,1.15)
\psdots[linecolor=black, dotsize=0.14](1.3,0.35)
\psdots[linecolor=black, dotsize=0.14](0.5,1.15)
\rput[bl](0.0,1.11){$S_2$}
\rput[bl](0.0,-0.49){$S_1$}
\rput[bl](6.3,0.19){$T_2$}
\rput[bl](6.24,-1.15){$T_1$}
\rput[bl](0.34,-1.77){$S'_1$}
\rput[bl](1.92,-1.79){$S'_2$}
\rput[bl](3.58,-1.79){$S'_3$}
\rput[bl](5.14,-1.81){$S'_4$}
\rput[bl](1.14,1.33){$T'_1$}
\rput[bl](2.76,1.33){$T'_2$}
\rput[bl](4.34,1.37){$T'_3$}
\rput[bl](6.0,1.27){$T'_4$}
\psdots[linecolor=red, dotsize=0.14](6.1,-0.45)
\psline[linecolor=red, linewidth=0.04](0.5,-1.25)(1.3,-0.45)(2.1,-1.25)(2.9,-0.45)(3.7,-1.25)(4.5,-0.45)(5.3,-1.25)(6.1,-0.45)(5.3,0.35)(4.5,-0.45)(3.7,0.35)(2.9,-0.45)(2.1,0.35)(1.3,-0.45)(0.5,0.35)(1.3,1.15)(2.1,0.35)(2.9,1.15)(3.7,0.35)(4.5,1.15)(5.3,0.35)(6.1,1.15)
\psdots[linecolor=red, dotstyle=o, dotsize=0.14, fillcolor=white](1.3,1.15)
\psdots[linecolor=red, dotstyle=o, dotsize=0.14, fillcolor=white](1.3,1.15)
\psdots[linecolor=red, dotsize=0.14](2.9,1.15)
\psdots[linecolor=red, dotstyle=o, dotsize=0.14, fillcolor=white](2.9,1.15)
\psdots[linecolor=red, dotstyle=o, dotsize=0.14, fillcolor=white](4.5,1.15)
\psdots[linecolor=red, dotstyle=o, dotsize=0.14, fillcolor=white](6.1,1.15)
\psdots[linecolor=red, dotstyle=o, dotsize=0.14, fillcolor=white](0.5,-1.25)
\psdots[linecolor=red, dotstyle=o, dotsize=0.14, fillcolor=white](2.1,-1.25)
\psdots[linecolor=red, dotstyle=o, dotsize=0.14, fillcolor=white](3.7,-1.25)
\psdots[linecolor=red, dotstyle=o, dotsize=0.14, fillcolor=white](5.3,-1.25)
\psline[linecolor=black, linewidth=0.04](6.1,-1.25)(5.3,-0.45)(4.5,-1.25)(3.7,-0.45)(2.9,-1.25)(2.1,-0.45)(1.3,-1.25)(0.5,-0.45)(1.3,0.35)(2.1,-0.45)(2.9,0.35)(3.7,-0.45)(4.5,0.35)(5.3,-0.45)(6.1,0.35)(5.3,1.15)(4.5,0.35)(3.7,1.15)(2.9,0.35)(2.1,1.15)(1.3,0.35)(0.5,1.15)
\psdots[linecolor=black, dotstyle=o, dotsize=0.14, fillcolor=white](0.5,1.15)
\psdots[linecolor=black, dotstyle=o, dotsize=0.14, fillcolor=white](0.5,-0.45)
\psdots[linecolor=black, dotstyle=o, dotsize=0.14, fillcolor=white](6.1,-1.25)
\psdots[linecolor=black, dotstyle=o, dotsize=0.14, fillcolor=white](6.1,0.35)
\end{pspicture}
}
\caption{Dual networks}
	\label{Fig 5}
\end{figure}

Let $G_{l,w}^{(i)} $ be the graph obtained from $H_{l,w}^{(i)} $ by replacing the ``fictive'' nodes $S_{1} ,S_{2} ,\ldots, S_k $ and  $T_{1} ,T_{2} ,\ldots, T_h $ with the nodes \textit{S } and \textit{T}, respectively, and let $\overline{G_{l,w}^{(i)} }$ be the graph obtained from $\overline{H_{l,w}^{(i)} }$ by the same operation (the terminal nodes, in this case, are $S'$ and $T'$). We notice that, if we consider the terminal nodes \textit{S }and \textit{T} as being placed to $\pm \infty $, then $\overline{G_{l,w}^{(i)} }$ as defined above corresponds to the definition of the dual graph of $G_{l,w}^{(i)} $. In Fig. \ref{Fig 5} the hammock network of dimensions $w=3$, $l=7$ (from Fig. \ref{Fig 2}), and its dual network (red), are presented.

\section{The reliability polynomial of a hammock network}

The main result of this paper is represented by Theorem \ref{t1} whose corollaries make the connection between the reliability polynomials of a hammock network and its dual. The proof of this theorem relies on the \textbf{Jordan Curve Theorem} \cite{Ha} which states that \textit{every simple closed plane curve divides the plane into an "interior" region bounded by the curve and an "exterior" region, so that every continuous path connecting a point from one region to a point from the other intersects that curve somewhere.}

\begin{theorem}\label{t1} Let $\Sigma =\left\{e_{1} ,e_{2} ,\ldots ,e_{n} \right\}\subset E_{l,w}^{(i)} $ be a subset of edges of the network $H_{l,w}^{(i)} $ and let $\overline{\Sigma }=\left\{\bar{e}_{1} ,\bar{e}_{2} ,\ldots ,\bar{e}_{n} \right\}\subset \overline{E_{l,w}^{(i)} }$ be the set of complementary edges ($i=1,2$). Then the following statements hold:

i) If $\Sigma $ is a \textit{mincut} in $H_{l,w}^{(i)}$, then $\overline{\Sigma }$ is a \textit{minpath} in $\overline{H_{l,w}^{(i)} }$;

ii) If $\Sigma $ is a \textit{minpath} in $H_{l,w}^{(i)}$, then $\overline{\Sigma }$ is a \textit{mincut} in $\overline{H_{l,w}^{(i)} }$.

\end{theorem}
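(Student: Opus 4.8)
The plan is to exploit the planar duality between even and odd edges: each edge $e = A_{x,y}A_{x+1,y\pm 1}$ and its complement $\bar e = A_{x+1,y}A_{x,y\pm 1}$ are the two diagonals of a unit square of the grid, so $e$ and $\bar e$ cross at exactly one point, and an $\mathbf{X}$-path using only $E^{(i)}_{l,w}$ edges and an $\mathbf{X}$-path using only $\overline{E^{(i)}_{l,w}}$ edges can meet only by crossing through such a diagonal intersection. First I would make this precise: if $\Sigma \subset E^{(i)}_{l,w}$ and $\gamma$ is any $\mathbf{X}$-path in $H^{(i)}_{l,w}$, then $\gamma$ meets $\Sigma$ (as point sets in the plane) if and only if $\gamma$ uses an edge of $\Sigma$; dually, $\gamma$ meets $\overline{\Sigma}$ if and only if one of $\gamma$'s edges is crossed by its complement lying in $\overline{\Sigma}$. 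I would also observe that the source nodes $S'_j = A_{x,0}$ and terminus nodes $T'_k = A_{z,w}$ of the dual sit on the bottom and top sides of the rectangle $[0,l]\times[0,w]$, while $S_j = A_{0,y}$ and $T_k = A_{l,z}$ sit on the left and right sides.

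For part (i), suppose $\Sigma$ is a mincut in $H^{(i)}_{l,w}$. To show $\overline{\Sigma}$ is a pathset in $\overline{H^{(i)}_{l,w}}$, I would run the Jordan Curve Theorem argument on the closed curve formed as follows: since $\Sigma$ is a cutset, $E^{(i)}_{l,w} \setminus \Sigma$ contains no $\mathbf{X}$-path from any $S_j$ to any $T_k$; I would let $R$ be the union of all points reachable from the left side (the $S$-nodes, viewed as joined at $-\infty$) along $\mathbf{X}$-paths in $E^{(i)}_{l,w}\setminus\Sigma$, together with the left edge $\{0\}\times[0,w]$, and argue that the topological boundary of $R$ inside the rectangle, together with appropriate arcs along the boundary of the rectangle closed up through $\pm\infty$, is a simple closed curve $J$ separating all $S$-nodes from all $T$-nodes. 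Every edge of $E^{(i)}_{l,w}$ crossing $J$ must lie in $\Sigma$ (otherwise it would extend $R$), and conversely each $e\in\Sigma$, being in a minimal cut, actually does cross $J$; its complement $\bar e$ then runs "along" $J$ in the perpendicular direction. Chaining these complementary edges $\bar e_1,\dots,\bar e_n$ along $J$ from the bottom side to the top side yields an $\mathbf{X}$-path in $\overline{E^{(i)}_{l,w}}$ joining an $S'$-node to a $T'$-node, so $\overline{\Sigma}$ is a pathset. Minimality transfers because removing any $\bar e_j$ breaks this path at a point where $J$ can be re-routed to connect $R$ across $\Sigma\setminus\{e_j\}$, i.e. $\Sigma\setminus\{e_j\}$ fails to be a cut — contradiction unless $\overline{\Sigma}\setminus\{\bar e_j\}$ is no longer a pathset.

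Part (ii) I would obtain from part (i) by duality: by Remarks \ref{r2} and \ref{r3}, applying (i) to the network $\overline{H^{(i)}_{l,w}}$ (whose dual is $H^{(i)}_{l,w}$ again) converts the statement "minpath of $H$ $\mapsto$ mincut of $\bar H$" into an instance of "mincut of $\bar H$ $\mapsto$ minpath of $\overline{\bar H}=H$", after swapping the roles of $(l,w)$ and source/terminus sides; alternatively I would re-run the same Jordan-curve construction with the roles of paths and cuts interchanged. I expect the main obstacle to be the rigorous bookkeeping in the Jordan Curve step: carefully defining the separating curve $J$ (it is the boundary of a union of grid cells, so piecewise smooth, but one must check it is a \emph{single} simple closed curve once the terminals are sent to $\pm\infty$), verifying that each minimal-cut edge is crossed by $J$ exactly once, and checking that the complementary edges met along $J$ form a genuine $\mathbf{X}$-path (consecutive, non-self-intersecting, alternating directions correctly). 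The parity observation — that complements of even edges are odd and vice versa, so $\overline{\Sigma}\subset \overline{E^{(i)}_{l,w}}$ automatically — together with the fact that $e\mapsto\bar e$ is an involution, keeps the combinatorial side clean; the topology is where the care is needed.
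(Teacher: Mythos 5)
The main gap is in part (ii). Applying part (i) to the network $\overline{H_{l,w}^{(i)}}$ (using $\overline{\overline{H_{l,w}^{(i)}}}=H_{l,w}^{(i)}$) yields: if $\Sigma'$ is a \emph{mincut} in $\overline{H_{l,w}^{(i)}}$, then $\overline{\Sigma'}$ is a \emph{minpath} in $H_{l,w}^{(i)}$. That is the \emph{converse} of what (ii) asserts, and you cannot pass from ``mincut of $\overline{H}$ gives minpath of $H$'' to ``minpath of $H$ gives mincut of $\overline{H}$'' without showing that every minpath of $H$ actually arises as the complement of some mincut of $\overline{H}$ --- which is the content of (ii) itself. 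What the involution $\bar{\bar{e}}=e$ legitimately buys is only the minimality half: \emph{once} $\overline{\Sigma}$ is known to be a cutset, a proper cutset contained in $\overline{\Sigma}$ would contain a mincut whose complement, by (i) applied to $\overline{H}$, would be a pathset properly contained in the minpath $\Sigma$, a contradiction. The cutset property needs its own topological argument; in the paper it is the short Jordan-curve step: close the minpath $\Sigma$ into the simple closed curve $\gamma=\Sigma\cup T_{j}A_{l,-1}\cup A_{l,-1}A_{0,-1}\cup A_{0,-1}S_{i}$ running below the rectangle, note that every dual source $A_{x,0}$ is interior to $\gamma$ and every dual terminus $A_{z,w}$ exterior, so every \textbf{X}-path of the dual meets $\gamma$, and it can only meet it by using an edge of $\overline{\Sigma}$. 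Your fallback sentence ``re-run the same Jordan-curve construction with roles interchanged'' gestures toward this, but the argument you actually gave for (ii) is the invalid duality shortcut.

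For part (i) your route (boundary of the region $R$ reachable from the sources in $E_{l,w}^{(i)}\setminus\Sigma$) genuinely differs from the paper's, which works edge by edge: each $e_{i}\in\Sigma$ lies on an \textbf{X}-path $\sigma_{i}\cup e_{i}\cup\tau_{i}$ avoiding the rest of $\Sigma$, and Jordan-curve arguments show that $\Sigma$ has exactly one edge touching $y=0$ and one touching $y=w$, and that every unit square has either zero or exactly two sides in $\Sigma$; this square lemma is what lets the complements $\bar{e}_{1},\dots,\bar{e}_{n}$ be ordered into an \textbf{X}-path. Your sketch defers precisely the analogous points: that the boundary of $R$, closed up through the terminals, is a \emph{single} simple closed curve (it could a priori have several components or pinch points at vertices, and $R$ as you define it is a union of paths, not of cells), that each mincut edge crosses it exactly once, that the complements met along it chain into a non-self-intersecting \textbf{X}-path, and the minimality transfer (``$J$ can be re-routed'') is asserted rather than proved. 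Since these checks are the substantive content of the theorem, the proposal as written is a plausible plan rather than a proof; to complete it you would need either the paper's square lemma or an equivalent local statement about how $\Sigma$ meets each unit cell along the boundary of $R$.
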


\begin{proof} 

i) Since $\Sigma $ is a mincut, for every $e_{i} \in \Sigma $ there exists an \textbf{X }-- path which contains $e_{i} $ and connects a source node (denoted by $S_{i} $) to a terminus node (denoted by $T_{i} $): $\xi _{i} =\sigma _{i} \cup e_{i} \cup \tau _{i} $, where  $\sigma _{i} $ is an \textbf{X }-- path from $S_{i} $ to $e_{i} $ and $\tau _{i} $ is an \textbf{X }-- path from $e_{i} $ to $T_{i} $  and $\sigma _{i} ,\tau _{i} \subset E_{l,w}^{(i)} -\Sigma $. Let $E_{i} ,F_{i} $ be the end vertices of $e_{i} $, where $E_{i} $ is reachable from the source node $S_{i} $ and $F_{i} $ from the target node $T_{i} $. We can see that $\sigma _{i} \cap \tau _{j} =\emptyset $ for $i\ne j$ (otherwise $\Sigma $ would not be a cutset). As a consequence, $E_{i} \ne F_{j} $ for $i\ne j$. Obviously, in order to be a cutset, $\Sigma $ must contain an edge with a vertex on \textit{Ox} axis. It can be proved that $\Sigma $ cannot contain two such edges. Suppose $e_{i} ,e_{j} \in \Sigma $ are two edges with a vertex on \textit{Ox} and $e_{i} $ is closer to \textit{O} then $e_{j} $. If $F_i\in Ox$, we consider the simple closed curve $\gamma =\tau _{i} \cup T_{i} A_{l+1,-1} \cup A_{l+1,-1}F_i $  (see Fig. \ref{Fig 6}), otherwise we take $\gamma =\tau _{i} \cup T_{i} A_{l+1,-1} \cup A_{l+1,-1} F'_{i}\cup F'_{i}F_i $, where $F'_i$ is the projection of $F_i$ on $Ox$ axis. We notice that $E_{j} $ belongs to the interior domain region bounded by $\gamma $ (otherwise, if $E_{j}$ was on $\tau _{i} $, we would have $\sigma _{j} \cap \tau _{i} \ne \emptyset $). Since $E_{j} $ is an interior point and $S_{j} $ is an exterior point of $\gamma $, it follows (by Jordan curve theorem) that the continuous path $\sigma _{j} $ connecting the two points intersects $\gamma $ somewhere, so $\sigma _{j} \cap \tau _{i} \ne \emptyset $, which is impossible (see Fig. \ref{Fig 6}). It follows that $\Sigma $ contains exactly one edge with a vertex on the \textit{Ox} axis. Let $e_{I} $ be this ``initial'' edge. Similarly, $\Sigma $ contains exactly one edge with a vertex on the straight line $y=w$, and let $e_{F} $ be this final edge.

 \begin{figure}[h]\centering
\psscalebox{0.8} 
{
\begin{pspicture}(0,-3.903479)(12.94,3.903479)
\psline[linecolor=black, linewidth=0.04, arrowsize=0.05291667cm 2.0,arrowlength=1.4,arrowinset=0.0]{->}(0.08,-2.8034792)(12.88,-2.8034792)
\psline[linecolor=black, linewidth=0.04, arrowsize=0.05291667cm 2.0,arrowlength=1.4,arrowinset=0.0]{->}(0.48,-3.6034791)(0.48,3.9965208)
\psdots[linecolor=black, dotsize=0.1](0.48,-2.8034792)
\psdots[linecolor=black, dotsize=0.1](2.08,-2.8034792)
\psdots[linecolor=black, dotsize=0.1](3.68,-2.8034792)
\psdots[linecolor=black, dotsize=0.1](5.28,-2.8034792)
\psdots[linecolor=black, dotsize=0.1](6.88,-2.8034792)
\psdots[linecolor=black, dotsize=0.1](8.48,-2.8034792)
\psdots[linecolor=black, dotsize=0.1](10.08,-2.8034792)
\psdots[linecolor=black, dotsize=0.1](1.28,-2.0034792)
\psdots[linecolor=black, dotsize=0.1](2.88,-2.0034792)
\psdots[linecolor=black, dotsize=0.1](4.48,-2.0034792)
\psdots[linecolor=black, dotsize=0.1](6.08,-2.0034792)
\psdots[linecolor=black, dotsize=0.1](7.68,-2.0034792)
\psdots[linecolor=black, dotsize=0.1](9.28,-2.0034792)
\psdots[linecolor=black, dotsize=0.1](8.48,-1.2034792)
\psdots[linecolor=black, dotsize=0.1](6.88,-1.2034792)
\psdots[linecolor=black, dotsize=0.1](5.28,-1.2034792)
\psdots[linecolor=black, dotsize=0.1](3.68,-1.2034792)
\psdots[linecolor=black, dotsize=0.1](2.08,-1.2034792)
\psdots[linecolor=black, dotsize=0.1](0.48,-1.2034792)
\psdots[linecolor=black, dotsize=0.1](7.68,-0.40347916)
\psdots[linecolor=black, dotsize=0.1](6.08,-0.40347916)
\psdots[linecolor=black, dotsize=0.1](4.48,-0.40347916)
\psdots[linecolor=black, dotsize=0.1](2.88,-0.40347916)
\psdots[linecolor=black, dotsize=0.1](1.28,-0.40347916)
\psdots[linecolor=black, dotsize=0.1](9.28,-0.40347916)
\psdots[linecolor=black, dotsize=0.1](9.28,1.1965208)
\psdots[linecolor=black, dotsize=0.1](9.28,2.796521)
\psdots[linecolor=black, dotsize=0.1](8.48,0.39652085)
\psdots[linecolor=black, dotsize=0.1](6.88,0.39652085)
\psdots[linecolor=black, dotsize=0.1](7.68,1.1965208)
\psdots[linecolor=black, dotsize=0.1](8.48,1.9965209)
\psdots[linecolor=black, dotsize=0.1](0.48,0.39652085)
\psdots[linecolor=black, dotsize=0.1](0.48,1.9965209)
\psdots[linecolor=black, dotsize=0.1](1.28,2.796521)
\psdots[linecolor=black, dotsize=0.1](1.28,1.1965208)
\psdots[linecolor=black, dotsize=0.1](2.08,1.9965209)
\psdots[linecolor=black, dotsize=0.1](2.88,2.796521)
\psdots[linecolor=black, dotsize=0.1](2.08,0.39652085)
\psdots[linecolor=black, dotsize=0.1](2.88,1.1965208)
\psdots[linecolor=black, dotsize=0.1](3.68,1.9965209)
\psdots[linecolor=black, dotsize=0.1](4.48,2.796521)
\psdots[linecolor=black, dotsize=0.1](3.68,0.39652085)
\psdots[linecolor=black, dotsize=0.1](4.48,1.1965208)
\psdots[linecolor=black, dotsize=0.1](5.28,1.9965209)
\psdots[linecolor=black, dotsize=0.1](6.08,2.796521)
\psdots[linecolor=black, dotsize=0.1](5.28,0.39652085)
\psdots[linecolor=black, dotsize=0.1](6.08,1.1965208)
\psdots[linecolor=black, dotsize=0.1](6.88,1.9965209)
\psdots[linecolor=black, dotsize=0.1](7.68,2.796521)
\psdots[linecolor=black, dotsize=0.1](10.08,-1.2034792)
\psdots[linecolor=black, dotsize=0.1](10.08,0.39652085)
\psdots[linecolor=black, dotsize=0.1](10.08,1.9965209)
\psdots[linecolor=black, dotsize=0.1](10.88,-2.0034792)
\psdots[linecolor=black, dotsize=0.1](10.88,-0.40347916)
\psdots[linecolor=black, dotsize=0.1](10.88,1.1965208)
\psdots[linecolor=black, dotsize=0.1](10.88,2.796521)
\psline[linecolor=red, linewidth=0.07](4.48,-2.0034792)(5.28,-2.8034792)
\psline[linecolor=red, linewidth=0.07](6.88,-2.8034792)(7.68,-2.0034792)
\rput[bl](0.08,3.596521){$y$}
\rput[bl](0.12,-3.183479){$O$}
\rput[bl](12.48,-3.203479){$x$}
\rput[bl](0.0,-1.3034792){$S_i$}
\rput[bl](0.0,1.7365209){$S_j$}
\rput[bl](4.94,-0.86347914){$\sigma_j$}
\rput[bl](11.44,-2.463479){$\gamma$}
\rput[bl](7.0,-0.74347913){$\tau_i$}
\rput[bl](1.7,-2.3434792){$\sigma_i$}
\rput[bl](8.48,-2.403479){$\tau_j$}
\rput[bl](11.82,-3.903479){$A_{l+1,-1}$}
\rput[bl](4.3,-1.8034792){$E_i$}
\rput[bl](5.04,-3.2434793){$F_i$}
\rput[bl](7.7,-2.0234792){$F_j$}
\rput[bl](6.7,-2.5634792){$E_j$}
\rput[bl](11.1,-0.54347914){$T_i$}
\psline[linecolor=red, linewidth=0.03](4.48,-2.0034792)(3.68,-1.2034792)
\psline[linecolor=red, linewidth=0.03](4.48,-2.0034792)(5.28,-1.2034792)
\psline[linecolor=red, linewidth=0.03](3.68,-1.2034792)(2.88,-2.0034792)
\psline[linecolor=red, linewidth=0.03](2.08,-1.2034792)(2.88,-0.40347916)
\psline[linecolor=red, linewidth=0.03](1.28,-0.40347916)(2.08,0.39652085)
\psline[linecolor=red, linewidth=0.03](1.28,1.1965208)(2.08,0.39652085)
\psline[linecolor=red, linewidth=0.03](2.08,1.9965209)(2.88,1.1965208)(3.68,1.9965209)
\psline[linecolor=red, linewidth=0.03](4.48,-0.40347916)(5.28,0.39652085)(6.08,-0.40347916)(6.88,0.39652085)(7.68,-0.40347916)
\psline[linecolor=red, linewidth=0.03](7.68,1.1965208)(8.48,0.39652085)(10.08,1.9965209)
\psline[linecolor=red, linewidth=0.03](9.28,-0.40347916)(10.88,1.1965208)
\psline[linecolor=red, linewidth=0.03](10.08,0.39652085)(10.88,-0.40347916)
\psline[linecolor=red, linewidth=0.03](8.48,1.9965209)(9.28,2.796521)
\psline[linecolor=black, linewidth=0.04, linestyle=dashed, dash=0.17638889cm 0.10583334cm](4.48,-2.0034792)(3.68,-2.8034792)(2.88,-2.0034792)(2.08,-2.8034792)(0.48,-1.2034792)
\psline[linecolor=black, linewidth=0.04, linestyle=dashed, dash=0.17638889cm 0.10583334cm](5.28,-2.8034792)(7.68,-0.40347916)(8.48,0.39652085)(10.08,-1.2034792)(10.88,-0.40347916)
\psline[linecolor=black, linewidth=0.04, linestyle=dashed, dash=0.17638889cm 0.10583334cm](7.68,-2.0034792)(8.48,-2.8034792)(9.28,-2.0034792)(10.08,-2.8034792)(10.88,-2.0034792)
\psline[linecolor=black, linewidth=0.04, linestyle=dashed, dash=0.17638889cm 0.10583334cm](6.88,-2.8034792)(3.68,0.39652085)(5.28,1.9965209)(4.48,2.796521)(3.68,1.9965209)(2.88,2.796521)(2.08,1.9965209)(1.28,2.796521)(0.48,1.9965209)
\psline[linecolor=red, linewidth=0.03](3.68,1.9965209)(4.48,1.1965208)
\psdots[linecolor=black, fillstyle=solid, dotstyle=o, dotsize=0.16, fillcolor=white](6.88,-2.8034792)
\rput[bl](10.74,-1.8834791){$T_j$}
\psdots[linecolor=black, dotsize=0.1](11.68,-3.6034791)
\psline[linecolor=black, linewidth=0.02, linestyle=dotted, dotsep=0.10583334cm](0.48,-2.8034792)(1.28,-2.0034792)(2.08,-1.2034792)(2.88,-2.0034792)
\psline[linecolor=black, linewidth=0.02, linestyle=dotted, dotsep=0.10583334cm](2.08,-1.2034792)(1.28,-0.40347916)(0.48,-1.2034792)
\psline[linecolor=black, linewidth=0.02, linestyle=dotted, dotsep=0.10583334cm](1.28,-0.40347916)(0.48,0.39652085)(1.28,1.1965208)(0.48,1.9965209)
\psline[linecolor=black, linewidth=0.02, linestyle=dotted, dotsep=0.10583334cm](2.08,1.9965209)(1.28,1.1965208)
\psline[linecolor=black, linewidth=0.02, linestyle=dotted, dotsep=0.10583334cm](2.88,1.1965208)(2.08,0.39652085)
\psline[linecolor=black, linewidth=0.02, linestyle=dotted, dotsep=0.10583334cm](2.08,0.39652085)(3.68,-1.2034792)
\psline[linecolor=black, linewidth=0.02, linestyle=dotted, dotsep=0.10583334cm](2.88,-0.40347916)(3.68,0.39652085)(2.88,1.1965208)
\psline[linecolor=black, linewidth=0.02, linestyle=dotted, dotsep=0.10583334cm](3.68,-1.2034792)(4.48,-0.40347916)
\psline[linecolor=black, linewidth=0.02, linestyle=dotted, dotsep=0.10583334cm](5.28,-1.2034792)(6.08,-0.40347916)(7.68,-2.0034792)
\psline[linecolor=black, linewidth=0.02, linestyle=dotted, dotsep=0.10583334cm](7.68,-0.40347916)(9.28,-2.0034792)
\psline[linecolor=black, linewidth=0.02, linestyle=dotted, dotsep=0.10583334cm](7.68,-2.0034792)(9.28,-0.40347916)
\psline[linecolor=black, linewidth=0.02, linestyle=dotted, dotsep=0.10583334cm](9.28,-2.0034792)(10.08,-1.2034792)(10.88,-2.0034792)
\psline[linecolor=black, linewidth=0.02, linestyle=dotted, dotsep=0.10583334cm](10.08,0.39652085)(7.68,2.796521)(5.28,0.39652085)(4.48,1.1965208)
\psline[linecolor=black, linewidth=0.02, linestyle=dotted, dotsep=0.10583334cm](5.28,1.9965209)(6.88,0.39652085)(8.48,1.9965209)
\psline[linecolor=black, linewidth=0.02, linestyle=dotted, dotsep=0.10583334cm](5.28,1.9965209)(6.08,2.796521)(7.68,1.1965208)
\psline[linecolor=black, linewidth=0.02, linestyle=dotted, dotsep=0.10583334cm](9.28,2.796521)(10.88,1.1965208)
\psline[linecolor=black, linewidth=0.02, linestyle=dotted, dotsep=0.10583334cm](10.08,1.9965209)(10.88,2.796521)
\psline[linecolor=black, linewidth=0.05](5.28,-2.8034792)(8.48,0.39652085)(10.08,-1.2034792)(10.88,-0.40347916)(11.68,-3.6034791)(5.28,-2.8034792)
\psdots[linecolor=red, fillstyle=solid, dotstyle=o, dotsize=0.16, fillcolor=white](6.08,-2.0034792)
\end{pspicture}
}
\caption{A mincut $\Sigma$ (red) cannot have two edges with a vertex on the $Ox$ axis}
	\label{Fig 6}
\end{figure}
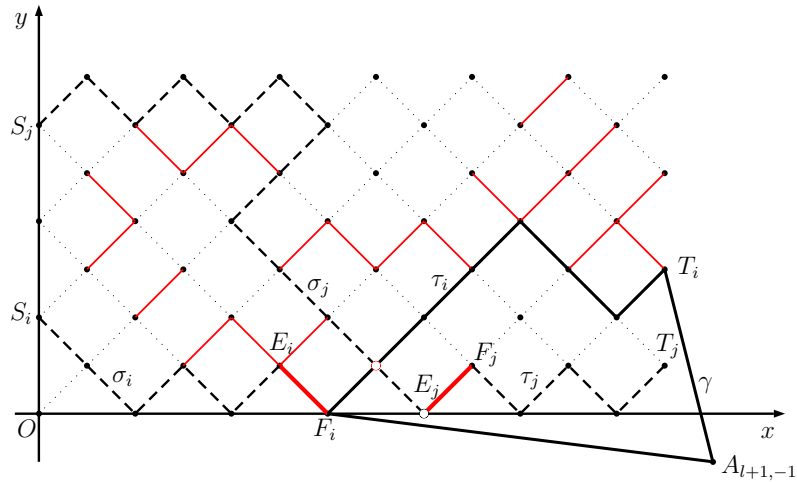

 We shall prove that any square with sides in $E_{l,w}^{(i)} $ has either two sides or none in $\Sigma $. Let \textit{MNPQ} be a square with sides in $E_{l,w}^{(i)} $ such that $MN=e_{i} \in \Sigma $. We know that one of the endpoints of $e_{i} $ (suppose $M=E_{i} $) is connected to a source node by an \textbf{X }-- path $\sigma _{i} $, and that the other one, $N=F_{i}$, is connected to one of the terminus nodes by an \textbf{X }-- path $\tau _{i} $. If all the other sides of the square were in $E_{l,w}^{(i)} -\Sigma $, then the \textbf{X }-- path $\sigma _{i} \cup MQ\cup QP\cup PN\cup \tau _{i} \subset E_{l,w}^{(i)} -\Sigma $ would connect a source node to a terminus node, so $\Sigma $ would not be a cutset. On the other hand, if \textit{MNPQ} has at least 3 edges in $\Sigma $, $e_{i} =MN,$ $e_{j} =MQ$ and $e_{k} =NP$, it follows that two opposite vertices are reachable from source nodes (suppose $M=E_{i} =E_{j} $, $P=E_{k} $) and the other two are reachable from terminus nodes ($N=F_{i} =F_{k} $, $Q=F_{j} $). If $\tau _{i} \cap \tau _{j} \ne \emptyset $, we denote by $\gamma $ the simple loop formed by $QN$, $\tau _{i} $ and  $\tau _{j} $. Otherwise, $\gamma =QN\cup \tau _{i} \cup T_{i} T_{j} \cup \tau _{j} $. One of the points \textit{M} and \textit{P} is in the interior region bounded by $\gamma $. Let \textit{M }be this point. Since \textit{M} is connected to the source node $S_{i} $ by $\sigma _{i} $ and $S_{i} $ is in the exterior of $\gamma $, it follows that $\sigma _{i} \cap \gamma \ne \emptyset $, so $\sigma _{i} \cap \tau _{_{j} } \ne \emptyset $, which is impossible (see Fig. \ref{Fig 7}). Thus, the square \textit{MNPQ} has exactly two sides in $\Sigma $. 

 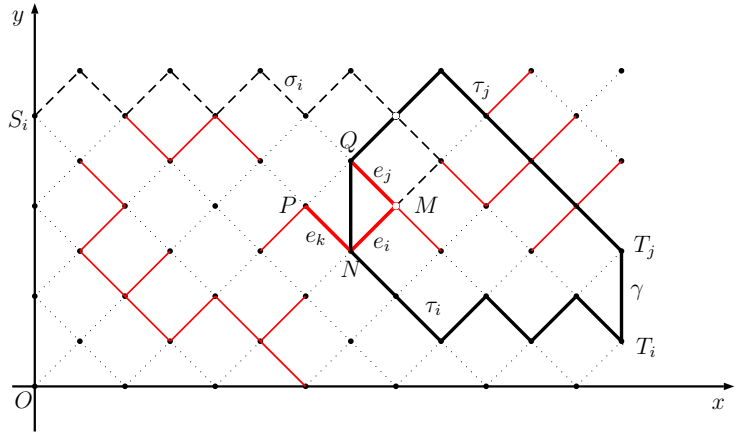
\begin{figure}[h]\centering
 \psscalebox{0.75} 
{
\begin{pspicture}(0,-3.753479)(12.786958,3.753479)
\psline[linecolor=black, linewidth=0.04, arrowsize=0.05291667cm 2.0,arrowlength=1.4,arrowinset=0.0]{->}(0.08,-2.9534793)(12.88,-2.9534793)
\psline[linecolor=black, linewidth=0.04, arrowsize=0.05291667cm 2.0,arrowlength=1.4,arrowinset=0.0]{->}(0.48,-3.7534792)(0.48,3.8465207)
\psdots[linecolor=black, dotsize=0.1](0.48,-2.9534793)
\psdots[linecolor=black, dotsize=0.1](2.08,-2.9534793)
\psdots[linecolor=black, dotsize=0.1](3.68,-2.9534793)
\psdots[linecolor=black, dotsize=0.1](5.28,-2.9534793)
\psdots[linecolor=black, dotsize=0.1](6.88,-2.9534793)
\psdots[linecolor=black, dotsize=0.1](8.48,-2.9534793)
\psdots[linecolor=black, dotsize=0.1](10.08,-2.9534793)
\psdots[linecolor=black, dotsize=0.1](1.28,-2.1534793)
\psdots[linecolor=black, dotsize=0.1](2.88,-2.1534793)
\psdots[linecolor=black, dotsize=0.1](4.48,-2.1534793)
\psdots[linecolor=black, dotsize=0.1](6.08,-2.1534793)
\psdots[linecolor=black, dotsize=0.1](7.68,-2.1534793)
\psdots[linecolor=black, dotsize=0.1](9.28,-2.1534793)
\psdots[linecolor=black, dotsize=0.1](8.48,-1.3534793)
\psdots[linecolor=black, dotsize=0.1](6.88,-1.3534793)
\psdots[linecolor=black, dotsize=0.1](5.28,-1.3534793)
\psdots[linecolor=black, dotsize=0.1](3.68,-1.3534793)
\psdots[linecolor=black, dotsize=0.1](2.08,-1.3534793)
\psdots[linecolor=black, dotsize=0.1](0.48,-1.3534793)
\psdots[linecolor=black, dotsize=0.1](7.68,-0.5534793)
\psdots[linecolor=black, dotsize=0.1](6.08,-0.5534793)
\psdots[linecolor=black, dotsize=0.1](4.48,-0.5534793)
\psdots[linecolor=black, dotsize=0.1](2.88,-0.5534793)
\psdots[linecolor=black, dotsize=0.1](1.28,-0.5534793)
\psdots[linecolor=black, dotsize=0.1](9.28,-0.5534793)
\psdots[linecolor=black, dotsize=0.1](9.28,1.0465207)
\psdots[linecolor=black, dotsize=0.1](9.28,2.6465206)
\psdots[linecolor=black, dotsize=0.1](8.48,0.2465207)
\psdots[linecolor=black, dotsize=0.1](6.88,0.2465207)
\psdots[linecolor=black, dotsize=0.1](7.68,1.0465207)
\psdots[linecolor=black, dotsize=0.1](8.48,1.8465207)
\psdots[linecolor=black, dotsize=0.1](0.48,0.2465207)
\psdots[linecolor=black, dotsize=0.1](0.48,1.8465207)
\psdots[linecolor=black, dotsize=0.1](1.28,2.6465206)
\psdots[linecolor=black, dotsize=0.1](1.28,1.0465207)
\psdots[linecolor=black, dotsize=0.1](2.08,1.8465207)
\psdots[linecolor=black, dotsize=0.1](2.88,2.6465206)
\psdots[linecolor=black, dotsize=0.1](2.08,0.2465207)
\psdots[linecolor=black, dotsize=0.1](2.88,1.0465207)
\psdots[linecolor=black, dotsize=0.1](3.68,1.8465207)
\psdots[linecolor=black, dotsize=0.1](4.48,2.6465206)
\psdots[linecolor=black, dotsize=0.1](3.68,0.2465207)
\psdots[linecolor=black, dotsize=0.1](4.48,1.0465207)
\psdots[linecolor=black, dotsize=0.1](5.28,1.8465207)
\psdots[linecolor=black, dotsize=0.1](6.08,2.6465206)
\psdots[linecolor=black, dotsize=0.1](5.28,0.2465207)
\psdots[linecolor=black, dotsize=0.1](6.08,1.0465207)
\psdots[linecolor=black, dotsize=0.1](6.88,1.8465207)
\psdots[linecolor=black, dotsize=0.1](7.68,2.6465206)
\psdots[linecolor=black, dotsize=0.1](10.08,-1.3534793)
\psdots[linecolor=black, dotsize=0.1](10.08,0.2465207)
\psdots[linecolor=black, dotsize=0.1](10.08,1.8465207)
\psdots[linecolor=black, dotsize=0.1](10.88,-2.1534793)
\psdots[linecolor=black, dotsize=0.1](10.88,-0.5534793)
\psdots[linecolor=black, dotsize=0.1](10.88,1.0465207)
\psdots[linecolor=black, dotsize=0.1](10.88,2.6465206)
\psline[linecolor=red, linewidth=0.032](4.48,-2.1534793)(5.28,-2.9534793)
\rput[bl](0.08,3.4465208){$y$}
\rput[bl](0.12,-3.3334794){$O$}
\rput[bl](12.48,-3.3534794){$x$}
\rput[bl](0.0,1.5865207){$S_i$}
\rput[bl](11.04,-1.4134793){$\gamma$}
\rput[bl](7.4,-1.6934793){$\tau_i$}
\rput[bl](4.9,2.3065207){$\sigma_i$}
\rput[bl](8.24,2.1465206){$\tau_j$}
\rput[bl](11.1,-0.6934793){$T_j$}
\psline[linecolor=red, linewidth=0.03](4.48,-2.1534793)(3.68,-1.3534793)
\psline[linecolor=red, linewidth=0.03](4.48,-2.1534793)(5.28,-1.3534793)
\psline[linecolor=red, linewidth=0.03](3.68,-1.3534793)(2.88,-2.1534793)
\psline[linecolor=red, linewidth=0.03](1.28,-0.5534793)(2.08,0.2465207)
\psline[linecolor=red, linewidth=0.03](1.28,1.0465207)(2.08,0.2465207)
\psline[linecolor=red, linewidth=0.03](2.08,1.8465207)(2.88,1.0465207)(3.68,1.8465207)
\psline[linecolor=red, linewidth=0.03](4.48,-0.5534793)(5.28,0.2465207)(6.08,-0.5534793)(6.88,0.2465207)(7.68,-0.5534793)
\psline[linecolor=red, linewidth=0.03](7.68,1.0465207)(8.48,0.2465207)(10.08,1.8465207)
\psline[linecolor=red, linewidth=0.03](9.28,-0.5534793)(10.88,1.0465207)
\psline[linecolor=red, linewidth=0.03](8.48,1.8465207)(9.28,2.6465206)
\psline[linecolor=red, linewidth=0.03](3.68,1.8465207)(4.48,1.0465207)
\rput[bl](11.14,-2.4334793){$T_i$}
\psline[linecolor=red, linewidth=0.03](6.88,0.2465207)(6.08,1.0465207)
\psline[linecolor=black, linewidth=0.03, linestyle=dashed, dash=0.17638889cm 0.10583334cm](6.88,0.2465207)(7.68,1.0465207)(6.08,2.6465206)(5.28,1.8465207)(4.48,2.6465206)(3.68,1.8465207)(2.88,2.6465206)(2.08,1.8465207)(1.28,2.6465206)(0.48,1.8465207)
\psline[linecolor=black, linewidth=0.03, linestyle=dashed, dash=0.17638889cm 0.10583334cm](6.08,-0.5534793)(7.68,-2.1534793)(8.48,-1.3534793)(9.28,-2.1534793)(10.08,-1.3534793)(10.88,-2.1534793)
\psline[linecolor=black, linewidth=0.03, linestyle=dashed, dash=0.17638889cm 0.10583334cm](6.08,1.0465207)(7.68,2.6465206)(10.88,-0.5534793)
\psline[linecolor=red, linewidth=0.03](1.28,-0.5534793)(2.88,-2.1534793)
\psline[linecolor=red, linewidth=0.064](5.28,0.2465207)(6.08,-0.5534793)(6.88,0.2465207)(6.08,1.0465207)
\psdots[linecolor=red, dotstyle=o, dotsize=0.14, fillcolor=white](6.88,0.2465207)
\rput[bl](7.2,0.1265207){$M$}
\rput[bl](5.9,-1.0134794){$N$}
\rput[bl](4.8,0.1265207){$P$}
\rput[bl](5.88,1.1865207){$Q$}
\rput[bl](6.48,-0.5534793){$e_i$}
\rput[bl](6.48,0.6465207){$e_j$}
\rput[bl](5.28,-0.45347932){$e_k$}
\psline[linecolor=red, linewidth=0.03](2.08,-1.3534793)(2.88,-0.5534793)
\psline[linecolor=black, linewidth=0.06](6.08,1.0465207)(6.08,-0.5534793)(7.68,-2.1534793)(8.48,-1.3534793)(9.28,-2.1534793)(10.08,-1.3534793)(10.88,-2.1534793)(10.88,-0.5534793)(7.68,2.6465206)(6.08,1.0465207)
\psdots[linecolor=black, dotstyle=o, dotsize=0.14, fillcolor=white](6.88,1.8465207)
\psline[linecolor=black, linewidth=0.02, linestyle=dotted, dotsep=0.10583334cm](0.48,-2.9534793)(1.28,-2.1534793)(2.08,-2.9534793)(2.88,-2.1534793)(3.68,-2.9534793)(4.48,-2.1534793)
\psline[linecolor=black, linewidth=0.02, linestyle=dotted, dotsep=0.10583334cm](0.48,1.8465207)(1.28,1.0465207)(0.48,0.2465207)(1.28,-0.5534793)(0.48,-1.3534793)(1.28,-2.1534793)(2.08,-1.3534793)
\psline[linecolor=black, linewidth=0.02, linestyle=dotted, dotsep=0.10583334cm](2.08,1.8465207)(1.28,1.0465207)
\psline[linecolor=black, linewidth=0.02, linestyle=dotted, dotsep=0.10583334cm](2.88,1.0465207)(2.08,0.2465207)(3.68,-1.3534793)(4.48,-0.5534793)(2.88,1.0465207)
\psline[linecolor=black, linewidth=0.02, linestyle=dotted, dotsep=0.10583334cm](3.68,0.2465207)(5.28,1.8465207)(6.08,1.0465207)(5.28,0.2465207)(4.48,1.0465207)
\psline[linecolor=black, linewidth=0.02, linestyle=dotted, dotsep=0.10583334cm](3.68,0.2465207)(2.88,-0.5534793)
\psline[linecolor=black, linewidth=0.02, linestyle=dotted, dotsep=0.10583334cm](4.48,-0.5534793)(6.88,-2.9534793)
\psline[linecolor=black, linewidth=0.02, linestyle=dotted, dotsep=0.10583334cm](5.28,-1.3534793)(6.08,-0.5534793)
\psline[linecolor=black, linewidth=0.02, linestyle=dotted, dotsep=0.10583334cm](5.28,-2.9534793)(8.48,0.2465207)(10.08,-1.3534793)
\psline[linecolor=black, linewidth=0.02, linestyle=dotted, dotsep=0.10583334cm](6.88,-2.9534793)(7.68,-2.1534793)(8.48,-2.9534793)(9.28,-2.1534793)(10.08,-2.9534793)(10.88,-2.1534793)
\psline[linecolor=black, linewidth=0.02, linestyle=dotted, dotsep=0.10583334cm](7.68,-0.5534793)(8.48,-1.3534793)(9.28,-0.5534793)
\psline[linecolor=black, linewidth=0.02, linestyle=dotted, dotsep=0.10583334cm](10.88,-0.5534793)(10.08,-1.3534793)
\psline[linecolor=black, linewidth=0.02, linestyle=dotted, dotsep=0.10583334cm](7.68,1.0465207)(8.48,1.8465207)
\psline[linecolor=black, linewidth=0.02, linestyle=dotted, dotsep=0.10583334cm](9.28,2.6465206)(10.88,1.0465207)
\psline[linecolor=black, linewidth=0.02, linestyle=dotted, dotsep=0.10583334cm](10.08,1.8465207)(10.88,2.6465206)
\end{pspicture}
}
\caption{A square of the network  cannot have three sides in $\Sigma$ }
	\label{Fig 7}
\end{figure}
We can change the indices of the edges of $\Sigma $ such that $e_{1} =e_{I} $, $e_{2} $ is the other edge in $\Sigma $ of the square with a side $e_{1} $, and so on up to the final edge, $e_{n} =e_{F} $. It follows that $\overline{\Sigma }=\left\{\bar{e}_{1} ,\bar{e}_{2} ,\ldots ,\bar{e}_{n} \right\}$ is an \textbf{X }-- path connecting a point on \textit{Ox} to a point on the horizontal line $y=w$, so $\overline{\Sigma }$ is a minpath in $\overline{H_{l,w}^{(i)} }$ (see Fig. \ref{Fig 8}).

 \begin{figure}[h]\centering
 \psscalebox{0.75} 
{
\begin{pspicture}(0,-3.753479)(12.706958,3.753479)
\psline[linecolor=black, linewidth=0.04, arrowsize=0.05291667cm 2.0,arrowlength=1.4,arrowinset=0.0]{->}(0.0,-2.9534793)(12.8,-2.9534793)
\psline[linecolor=black, linewidth=0.04, arrowsize=0.05291667cm 2.0,arrowlength=1.4,arrowinset=0.0]{->}(0.4,-3.7534792)(0.4,3.8465207)
\psdots[linecolor=black, dotsize=0.1](0.4,-2.9534793)
\psdots[linecolor=black, dotsize=0.1](2.0,-2.9534793)
\psdots[linecolor=black, dotsize=0.1](3.6,-2.9534793)
\psdots[linecolor=black, dotsize=0.1](5.2,-2.9534793)
\psdots[linecolor=black, dotsize=0.1](6.8,-2.9534793)
\psdots[linecolor=black, dotsize=0.1](8.4,-2.9534793)
\psdots[linecolor=black, dotsize=0.1](10.0,-2.9534793)
\psdots[linecolor=black, dotsize=0.1](1.2,-2.1534793)
\psdots[linecolor=black, dotsize=0.1](2.8,-2.1534793)
\psdots[linecolor=black, dotsize=0.1](4.4,-2.1534793)
\psdots[linecolor=black, dotsize=0.1](6.0,-2.1534793)
\psdots[linecolor=black, dotsize=0.1](7.6,-2.1534793)
\psdots[linecolor=black, dotsize=0.1](9.2,-2.1534793)
\psdots[linecolor=black, dotsize=0.1](8.4,-1.3534793)
\psdots[linecolor=black, dotsize=0.1](6.8,-1.3534793)
\psdots[linecolor=black, dotsize=0.1](5.2,-1.3534793)
\psdots[linecolor=black, dotsize=0.1](3.6,-1.3534793)
\psdots[linecolor=black, dotsize=0.1](2.0,-1.3534793)
\psdots[linecolor=black, dotsize=0.1](0.4,-1.3534793)
\psdots[linecolor=black, dotsize=0.1](7.6,-0.5534793)
\psdots[linecolor=black, dotsize=0.1](6.0,-0.5534793)
\psdots[linecolor=black, dotsize=0.1](4.4,-0.5534793)
\psdots[linecolor=black, dotsize=0.1](2.8,-0.5534793)
\psdots[linecolor=black, dotsize=0.1](1.2,-0.5534793)
\psdots[linecolor=black, dotsize=0.1](9.2,-0.5534793)
\psdots[linecolor=black, dotsize=0.1](9.2,1.0465207)
\psdots[linecolor=black, dotsize=0.1](9.2,2.6465206)
\psdots[linecolor=black, dotsize=0.1](8.4,0.2465207)
\psdots[linecolor=black, dotsize=0.1](6.8,0.2465207)
\psdots[linecolor=black, dotsize=0.1](7.6,1.0465207)
\psdots[linecolor=black, dotsize=0.1](8.4,1.8465207)
\psdots[linecolor=black, dotsize=0.1](0.4,0.2465207)
\psdots[linecolor=black, dotsize=0.1](0.4,1.8465207)
\psdots[linecolor=black, dotsize=0.1](1.2,2.6465206)
\psdots[linecolor=black, dotsize=0.1](1.2,1.0465207)
\psdots[linecolor=black, dotsize=0.1](2.0,1.8465207)
\psdots[linecolor=black, dotsize=0.1](2.8,2.6465206)
\psdots[linecolor=black, dotsize=0.1](2.0,0.2465207)
\psdots[linecolor=black, dotsize=0.1](2.8,1.0465207)
\psdots[linecolor=black, dotsize=0.1](3.6,1.8465207)
\psdots[linecolor=black, dotsize=0.1](4.4,2.6465206)
\psdots[linecolor=black, dotsize=0.1](3.6,0.2465207)
\psdots[linecolor=black, dotsize=0.1](4.4,1.0465207)
\psdots[linecolor=black, dotsize=0.1](5.2,1.8465207)
\psdots[linecolor=black, dotsize=0.1](6.0,2.6465206)
\psdots[linecolor=black, dotsize=0.1](5.2,0.2465207)
\psdots[linecolor=black, dotsize=0.1](6.0,1.0465207)
\psdots[linecolor=black, dotsize=0.1](7.6,2.6465206)
\psdots[linecolor=black, dotsize=0.1](10.0,-1.3534793)
\psdots[linecolor=black, dotsize=0.1](10.0,0.2465207)
\psdots[linecolor=black, dotsize=0.1](10.0,1.8465207)
\psdots[linecolor=black, dotsize=0.1](10.8,-2.1534793)
\psdots[linecolor=black, dotsize=0.1](10.8,-0.5534793)
\psdots[linecolor=black, dotsize=0.1](10.8,1.0465207)
\psdots[linecolor=black, dotsize=0.1](10.8,2.6465206)
\psline[linecolor=red, linewidth=0.032](4.4,-2.1534793)(5.2,-2.9534793)
\rput[bl](0.0,3.4465208){$y$}
\rput[bl](0.04,-3.3334794){$O$}
\rput[bl](12.4,-3.3534794){$x$}
\psline[linecolor=red, linewidth=0.03](4.4,-2.1534793)(3.6,-1.3534793)
\psline[linecolor=red, linewidth=0.03](4.4,-2.1534793)(5.2,-1.3534793)
\psline[linecolor=red, linewidth=0.03](3.6,-1.3534793)(2.8,-2.1534793)
\psline[linecolor=red, linewidth=0.03](1.2,-0.5534793)(2.0,0.2465207)
\psline[linecolor=red, linewidth=0.03](1.2,1.0465207)(2.0,0.2465207)
\psline[linecolor=red, linewidth=0.03](2.0,1.8465207)(2.8,1.0465207)(3.6,1.8465207)
\psline[linecolor=red, linewidth=0.03](4.4,-0.5534793)(5.2,0.2465207)(6.0,-0.5534793)(6.8,0.2465207)(7.6,-0.5534793)
\psline[linecolor=red, linewidth=0.03](7.6,1.0465207)(8.4,0.2465207)(10.0,1.8465207)
\psline[linecolor=red, linewidth=0.03](9.2,-0.5534793)(10.8,1.0465207)
\psline[linecolor=red, linewidth=0.03](8.4,1.8465207)(9.2,2.6465206)
\psline[linecolor=red, linewidth=0.03](3.6,1.8465207)(4.4,1.0465207)
\rput[bl](4.98,-2.6734793){$e_1$}
\rput[bl](3.84,-1.6134793){$e_3$}
\rput[bl](4.3,-1.9734793){$e_2$}
\psline[linecolor=red, linewidth=0.03](2.0,-1.3534793)(2.8,-0.5534793)
\psline[linecolor=black, linewidth=0.02, linestyle=dotted, dotsep=0.10583334cm](0.4,-2.9534793)(1.2,-2.1534793)(2.0,-2.9534793)(2.8,-2.1534793)(3.6,-2.9534793)(4.4,-2.1534793)
\psline[linecolor=black, linewidth=0.02, linestyle=dotted, dotsep=0.10583334cm](0.4,1.8465207)(1.2,1.0465207)(0.4,0.2465207)(1.2,-0.5534793)(0.4,-1.3534793)(1.2,-2.1534793)(2.0,-1.3534793)
\psline[linecolor=black, linewidth=0.02, linestyle=dotted, dotsep=0.10583334cm](2.0,1.8465207)(1.2,1.0465207)
\psline[linecolor=black, linewidth=0.02, linestyle=dotted, dotsep=0.10583334cm](2.8,1.0465207)(2.0,0.2465207)(3.6,-1.3534793)(4.4,-0.5534793)(2.8,1.0465207)
\psline[linecolor=black, linewidth=0.02, linestyle=dotted, dotsep=0.10583334cm](3.6,0.2465207)(2.8,-0.5534793)
\psline[linecolor=black, linewidth=0.02, linestyle=dotted, dotsep=0.10583334cm](4.4,-0.5534793)(6.8,-2.9534793)
\psline[linecolor=black, linewidth=0.02, linestyle=dotted, dotsep=0.10583334cm](5.2,-1.3534793)(6.0,-0.5534793)
\psline[linecolor=black, linewidth=0.02, linestyle=dotted, dotsep=0.10583334cm](5.2,-2.9534793)(8.4,0.2465207)(10.0,-1.3534793)
\psline[linecolor=black, linewidth=0.02, linestyle=dotted, dotsep=0.10583334cm](6.8,-2.9534793)(7.6,-2.1534793)(8.4,-2.9534793)(9.2,-2.1534793)(10.0,-2.9534793)(10.8,-2.1534793)
\psline[linecolor=black, linewidth=0.02, linestyle=dotted, dotsep=0.10583334cm](7.6,-0.5534793)(8.4,-1.3534793)(9.2,-0.5534793)
\psline[linecolor=black, linewidth=0.02, linestyle=dotted, dotsep=0.10583334cm](10.8,-0.5534793)(10.0,-1.3534793)
\psline[linecolor=black, linewidth=0.02, linestyle=dotted, dotsep=0.10583334cm](7.6,1.0465207)(8.4,1.8465207)
\psline[linecolor=black, linewidth=0.02, linestyle=dotted, dotsep=0.10583334cm](9.2,2.6465206)(10.8,1.0465207)
\psline[linecolor=black, linewidth=0.02, linestyle=dotted, dotsep=0.10583334cm](10.0,1.8465207)(10.8,2.6465206)
\psline[linecolor=red, linewidth=0.03](4.4,1.0465207)(3.6,0.2465207)
\psline[linecolor=red, linewidth=0.03](10.0,0.2465207)(10.8,-0.5534793)
\psdots[linecolor=black, dotsize=0.1](6.8,1.8465207)
\psline[linecolor=black, linewidth=0.02, linestyle=dotted, dotsep=0.10583334cm](1.2,-0.5534793)(2.8,-2.1534793)
\psline[linecolor=black, linewidth=0.02, linestyle=dotted, dotsep=0.10583334cm](1.2,2.6465206)(2.0,1.8465207)
\psline[linecolor=black, linewidth=0.02, linestyle=dotted, dotsep=0.10583334cm](0.4,1.8465207)(1.2,2.6465206)
\psline[linecolor=black, linewidth=0.02, linestyle=dotted, dotsep=0.10583334cm](2.0,1.8465207)(2.8,2.6465206)(3.6,1.8465207)(4.4,2.6465206)(6.8,0.2465207)
\psline[linecolor=black, linewidth=0.02, linestyle=dotted, dotsep=0.10583334cm](5.2,0.2465207)(7.6,2.6465206)(10.0,0.2465207)
\psline[linecolor=black, linewidth=0.02, linestyle=dotted, dotsep=0.10583334cm](5.2,0.2465207)(4.4,1.0465207)(6.0,2.6465206)(7.6,1.0465207)(6.8,0.2465207)
\psline[linecolor=black, linewidth=0.02, linestyle=dotted, dotsep=0.10583334cm](6.0,-0.5534793)(7.6,-2.1534793)(8.4,-1.3534793)(9.2,-2.1534793)(10.0,-1.3534793)(10.8,-2.1534793)
\psline[linecolor=black, linewidth=0.04](4.4,-2.9534793)(5.2,-2.1534793)(4.4,-1.3534793)(3.6,-2.1534793)(2.8,-1.3534793)(2.0,-0.5534793)(1.2,0.2465207)(2.0,1.0465207)(2.8,1.8465207)(3.6,1.0465207)(4.4,0.2465207)(5.2,-0.5534793)(6.0,0.2465207)(6.8,-0.5534793)(7.6,0.2465207)(8.4,1.0465207)(9.2,0.2465207)(10.0,-0.5534793)(10.8,0.2465207)(10.0,1.0465207)(9.2,1.8465207)(8.4,2.6465206)
\psdots[linecolor=black, dotstyle=o, dotsize=0.14, fillcolor=white](4.4,-2.9534793)
\psdots[linecolor=black, dotstyle=o, dotsize=0.14, fillcolor=white](5.2,-2.1534793)
\psdots[linecolor=black, dotstyle=o, dotsize=0.14, fillcolor=white](4.4,-1.3534793)
\psdots[linecolor=black, dotstyle=o, dotsize=0.14, fillcolor=white](3.6,-2.1534793)
\psdots[linecolor=black, dotstyle=o, dotsize=0.14, fillcolor=white](2.8,-1.3534793)
\psdots[linecolor=black, dotstyle=o, dotsize=0.14, fillcolor=white](2.0,-0.5534793)
\psdots[linecolor=black, dotstyle=o, dotsize=0.14, fillcolor=white](1.2,0.2465207)
\psdots[linecolor=black, dotstyle=o, dotsize=0.14, fillcolor=white](2.0,1.0465207)
\psdots[linecolor=black, dotstyle=o, dotsize=0.14, fillcolor=white](2.8,1.8465207)
\psdots[linecolor=black, dotstyle=o, dotsize=0.14, fillcolor=white](3.6,1.0465207)
\psdots[linecolor=black, dotstyle=o, dotsize=0.14, fillcolor=white](4.4,0.2465207)
\psdots[linecolor=black, dotstyle=o, dotsize=0.14, fillcolor=white](5.2,-0.5534793)
\psdots[linecolor=black, dotstyle=o, dotsize=0.14, fillcolor=white](6.0,0.2465207)
\psdots[linecolor=black, dotstyle=o, dotsize=0.14, fillcolor=white](6.8,-0.5534793)
\psdots[linecolor=black, dotstyle=o, dotsize=0.14, fillcolor=white](7.6,0.2465207)
\psdots[linecolor=black, dotstyle=o, dotsize=0.14, fillcolor=white](8.4,1.0465207)
\psdots[linecolor=black, dotstyle=o, dotsize=0.14, fillcolor=white](9.2,0.2465207)
\psdots[linecolor=black, dotstyle=o, dotsize=0.14, fillcolor=white](10.0,-0.5534793)
\psdots[linecolor=black, dotstyle=o, dotsize=0.14, fillcolor=white](10.8,0.2465207)
\psdots[linecolor=black, dotstyle=o, dotsize=0.14, fillcolor=white](10.0,1.0465207)
\psdots[linecolor=black, dotstyle=o, dotsize=0.14, fillcolor=white](9.2,1.8465207)
\psdots[linecolor=black, dotstyle=o, dotsize=0.14, fillcolor=white](8.4,2.6465206)
\rput[bl](8.66,2.4865208){$e_n$}
\rput[bl](9.78,1.3265207){$e_{n-1}$}
\end{pspicture}
}
\caption{If $\Sigma$ is a mincut in $H_{l,w}^{(i)} $, then $\overline{\Sigma}$ is a minpath in $\overline{H_{l,w}^{(i)} }$}
	\label{Fig 8}
\end{figure}
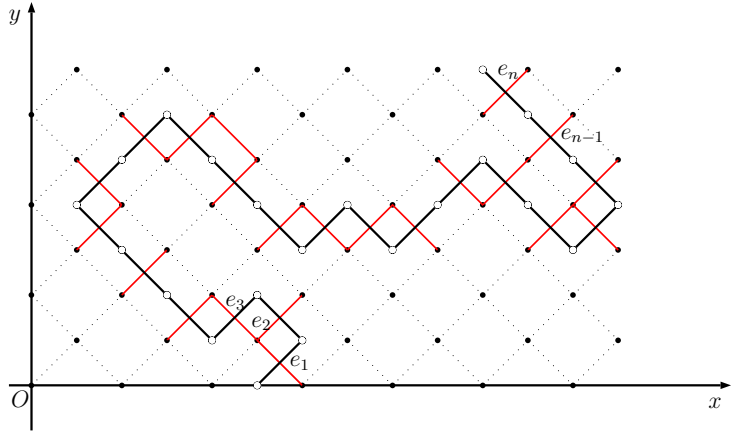

ii) Since $\Sigma $ is a minpath in $H_{l,w}^{(i)} $, it follows that it is an \textbf{X }-- path connecting a source node $S_{i} $ (located on \textit{Oy})  to a terminus node $T_{j} $ (located on the straight line $x=l$). We consider the simple closed curve $\gamma =\Sigma \cup T_{j} A_{l,-1} \cup A_{l,-1} A_{0,-1} \cup A_{0,-1} S_{i} $. If $\sigma $ is an X\textbf{ }-- path in $\overline{H_{l,w}^{(i)} }$, connecting a source node $S'_{i} =A_{x,0} $ (an interior point of $\gamma $) to a target node $T'_{j} =A_{z,w} $ (an exterior point) then, by Jordan Curve Theorem, it follows that $\sigma \cap \gamma \ne \emptyset$. Hence $\sigma $ contains at least one edge that ``cuts'' an edge of $\Sigma $ (an edge of $\overline{\Sigma }$). Thus, any pathset in $\overline{H_{l,w}^{(i)} }$ contains at least one edge of $\overline{\Sigma }$, so $\overline{\Sigma }$ is a cutset. It remains to prove that $\overline{\Sigma }$ is a mincut. Suppose that some edges can be eliminated from $\overline{\Sigma }$ to obtain a mincut $\Sigma '\subset \overline{\Sigma }$. As it was proved above, it follows that $\overline{\Sigma '}\subset \Sigma $ is a minpath in $H_{l,w}^{(i)} $, which is impossible, because $\Sigma $ is a minpath.         
\end{proof}

Theorem \ref{t1} states that $\Sigma \subset E_{l,w}^{(i)} $ is a minpath in $H_{l,w}^{(i)} $ if and only if $\overline{\Sigma }\subset \overline{E_{l,w}^{(i)} }$ is a mincut in $\overline{H_{l,w}^{(i)} }$. The symmetric statement is also true, by Remark \ref{r2}:  $\Sigma $ is a \textit{mincut }in $H_{l,w}^{(i)} $ if and only if $\overline{\Sigma }$ is a \textit{minpath }in $\overline{H_{l,w}^{(i)} }$. The corollary below gives a more general result, for any pathset and, respectively, cutset.

\begin{corollary}\label{c1} Let $\Sigma =\left\{e_{1} ,e_{2} ,\ldots ,e_{n} \right\}\subset E_{l,w}^{(i)} $ be a subset of edges of the network $H_{l,w}^{(i)} $ and let $\overline{\Sigma }=\left\{\bar{e}_{1} ,\bar{e}_{2} ,\ldots ,\bar{e}_{n} \right\}\subset \overline{E_{l,w}^{(i)} }$ be the set of complementary edges. Then $\Sigma $ is a \textit{pathset} in $H_{l,w}^{(i)} $ if and only if $\overline{\Sigma }$ is a \textit{cutset} in $\overline{H_{l,w}^{(i)} }$.
\end{corollary}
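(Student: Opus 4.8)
The plan is to deduce Corollary~\ref{c1} from Theorem~\ref{t1} together with Remark~\ref{r2}, by exploiting the fact that every pathset contains a minpath and every cutset contains a mincut, and that the complementation map $\Sigma\mapsto\overline{\Sigma}$ is a bijection that respects inclusion (since $\overline{\Sigma_1\cup\Sigma_2}=\overline{\Sigma_1}\cup\overline{\Sigma_2}$ and $\overline{\Sigma_1\cap\Sigma_2}=\overline{\Sigma_1}\cap\overline{\Sigma_2}$, because $e\mapsto\bar e$ is injective on $\mathcal{E}_{l,w}$). The key observation is that a subset $\Sigma\subset E_{l,w}^{(i)}$ is a pathset precisely when it contains (as a subset) some minpath, and the complementary subset $\overline{\Sigma}\subset\overline{E_{l,w}^{(i)}}$ is a cutset precisely when it contains some mincut of $\overline{H_{l,w}^{(i)}}$.

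First I would prove the forward implication. Suppose $\Sigma$ is a pathset in $H_{l,w}^{(i)}$. Then $\Sigma$ contains a minpath $\Pi\subseteq\Sigma$. By Theorem~\ref{t1}(ii), $\overline{\Pi}$ is a mincut in $\overline{H_{l,w}^{(i)}}$. Since $\Pi\subseteq\Sigma$ implies $\overline{\Pi}\subseteq\overline{\Sigma}$, the set $\overline{\Sigma}$ contains a mincut, hence $\overline{\Sigma}$ is a cutset in $\overline{H_{l,w}^{(i)}}$. For the converse, suppose $\overline{\Sigma}$ is a cutset in $\overline{H_{l,w}^{(i)}}$. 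Then $\overline{\Sigma}$ contains a mincut $\Gamma\subseteq\overline{\Sigma}$ of $\overline{H_{l,w}^{(i)}}$. By Theorem~\ref{t1}(i) applied to the network $\overline{H_{l,w}^{(i)}}$ (whose dual is $H_{l,w}^{(i)}$ by Remark~\ref{r2}), $\overline{\Gamma}$ is a minpath in $\overline{\overline{H_{l,w}^{(i)}}}=H_{l,w}^{(i)}$. Since $\Gamma\subseteq\overline{\Sigma}$ gives $\overline{\Gamma}\subseteq\overline{\overline{\Sigma}}=\Sigma$ (using $\bar{\bar e}=e$), the set $\Sigma$ contains a minpath, so $\Sigma$ is a pathset in $H_{l,w}^{(i)}$.

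The only genuinely delicate point — the ``main obstacle'' — is making sure Theorem~\ref{t1} can legitimately be applied in both directions as quoted above. The theorem is stated for the networks $H_{l,w}^{(i)}$, not literally for their duals; but Remark~\ref{r3} (combined with Remark~\ref{r2}) identifies $\overline{H_{l,w}^{(i)}}$ with $H_{w,l}^{(2/i)}$ up to isomorphism, and under this isomorphism the source/terminus roles are exchanged with the geometric roles of the $Ox$ axis and the line $y=w$ in a way that is exactly what the proof of Theorem~\ref{t1} uses. So I would add a sentence noting that, since $\overline{\overline{H_{l,w}^{(i)}}}=H_{l,w}^{(i)}$ by Remark~\ref{r2}, both parts of Theorem~\ref{t1} are symmetric under $\Sigma\leftrightarrow\overline{\Sigma}$: $\Sigma$ is a mincut in $H_{l,w}^{(i)}$ iff $\overline{\Sigma}$ is a minpath in $\overline{H_{l,w}^{(i)}}$, and $\Sigma$ is a minpath in $H_{l,w}^{(i)}$ iff $\overline{\Sigma}$ is a mincut in $\overline{H_{l,w}^{(i)}}$ — precisely the equivalence stated in the paragraph following the theorem. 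With that symmetry in hand, the corollary follows in two short lines as above, and no further topology or combinatorics is needed.

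Finally I would remark, as a sanity check on the direction of the equivalence, that this is the expected dual statement: pathsets and cutsets swap under complementation, just as minpaths and mincuts do, and that the inclusion-preserving nature of the bijection is what upgrades the ``minimal'' statement of Theorem~\ref{t1} to the ``arbitrary subset'' statement of the corollary.
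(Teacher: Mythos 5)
Your argument is correct and is exactly the reasoning the paper intends: the paper states Corollary~\ref{c1} without proof, as an immediate consequence of Theorem~\ref{t1} together with the symmetry supplied by Remark~\ref{r2} (and the identification in Remark~\ref{r3}), and your write-up simply makes explicit the standard facts that every pathset contains a minpath, every cutset contains a mincut, and that $\Sigma\mapsto\overline{\Sigma}$ is an inclusion-preserving involution. Your flagged ``delicate point'' about applying the theorem to the dual network is handled the same way the paper handles its symmetric statement, so no gap remains.
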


As a consequence, by using the equation \eqref{3} and Remark \ref{r3}, we have the following corollary:

\begin{corollary}\label{c2} For any $l,w\ge 1$ and $i=1,2$ the following relation is true for all $p\in [0,1]$:
\begin{equation} \label{5}
 h_{l,w}^{(i)} (p)=1-h_{w,l}^{(2/i)} (1-p). \end{equation}
\end{corollary}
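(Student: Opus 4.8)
The plan is to derive \eqref{5} from the pathset/cutset expansions \eqref{2} and \eqref{3}, Corollary \ref{c1}, and Remark \ref{r3}. By \eqref{4} we have $h_{w,l}^{(2/i)}(1-p)=\overline{h_{l,w}^{(i)}}(p)$, so \eqref{5} is equivalent to the complementarity identity $h_{l,w}^{(i)}(p)+\overline{h_{l,w}^{(i)}}(p)=1$ for $p\in[0,1]$, and this is what I would prove. The two ingredients are: on one hand, by \eqref{3},
\[
h_{l,w}^{(i)}(p)=\sum_{P\in\mathcal P_{l,w}^{(i)}}p^{|P|}(1-p)^{lw-|P|};
\]
on the other, the dual $\overline{H_{l,w}^{(i)}}$ has $lw$ edges, each dual edge $\bar e$ being closed precisely when $e$ is open, i.e. with probability $1-p$, so the cutset formula \eqref{2} applied to $\overline{H_{l,w}^{(i)}}$ (with $1-p$ in the role of the edge probability, hence $p$ in the role of $q$) gives
\[
\overline{h_{l,w}^{(i)}}(p)=1-\sum_{C\in\mathcal C(\overline{H_{l,w}^{(i)}})}p^{|C|}(1-p)^{lw-|C|},
\]
the sum running over all cutsets of the dual.

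The bridge between these two sums is the complementation map $\Sigma\mapsto\overline\Sigma$. Since $\bar{\bar e}=e$ for every edge, this map is an involution on the power set of $\mathcal E_{l,w}$ that carries subsets of $E_{l,w}^{(i)}$ bijectively onto subsets of $\overline{E_{l,w}^{(i)}}$, and plainly $|\overline\Sigma|=|\Sigma|$. By Corollary \ref{c1} it restricts to a bijection between the pathsets of $H_{l,w}^{(i)}$ and the cutsets of $\overline{H_{l,w}^{(i)}}$. Re-indexing the cutset sum above through this bijection (writing $C=\overline P$ with $|C|=|P|$) turns it into $\sum_{P\in\mathcal P_{l,w}^{(i)}}p^{|P|}(1-p)^{lw-|P|}=h_{l,w}^{(i)}(p)$, whence $\overline{h_{l,w}^{(i)}}(p)=1-h_{l,w}^{(i)}(p)$. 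Substituting $\overline{h_{l,w}^{(i)}}(p)=h_{w,l}^{(2/i)}(1-p)$ from Remark \ref{r3} gives \eqref{5}; both sides being polynomials in $p$, the identity holds for every $p\in[0,1]$.

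I do not expect a genuinely hard step: all the topological content has already been packed into Theorem \ref{t1} and hence into Corollary \ref{c1}, which is quoted here. The only points needing care are (a) verifying that $\Sigma\mapsto\overline\Sigma$ is a cardinality-preserving bijection between the two families --- immediate from $\bar{\bar e}=e$ and Corollary \ref{c1} --- and (b) the probability bookkeeping, i.e. placing the factor $1-p$ on the dual edges so that the dual's cutset expansion reproduces the primal's pathset expansion and the final answer is $h_{w,l}^{(2/i)}(1-p)$ rather than $h_{w,l}^{(2/i)}(p)$.
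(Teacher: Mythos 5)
Your argument is correct and follows exactly the route the paper sketches (it gives no written proof, only the remark that the corollary follows from equation \eqref{3}, Corollary \ref{c1} and Remark \ref{r3}): you expand $h_{l,w}^{(i)}$ over pathsets, use the cardinality-preserving involution $\Sigma\mapsto\overline{\Sigma}$ together with Corollary \ref{c1} to identify that sum with the cutset sum of the dual network with edge probability $1-p$, and then invoke \eqref{4}. The bookkeeping in your step (b) is exactly right, so nothing is missing.
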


By Remark \ref{r1}, if at least one of $l$ or $w$ is an odd number, then $h_{l,w}^{(1)} =h_{l,w}^{(2)} =h_{l,w} $. Consequently:

\begin{corollary} If at least one of the integers \textit{l } and \textit{w } is odd, then the following relation is true for all $p\in [0,1]$:
\begin{equation} \label{6} h_{l,w} (p)=1-h_{w,l} (1-p) .\end{equation} 
\end{corollary}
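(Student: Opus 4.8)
The plan is to deduce \eqref{6} immediately from Corollary \ref{c2} together with Remark \ref{r1}; no new geometric input (in particular, no further appeal to the Jordan curve theorem) is required. The case $i=1$ of \eqref{5} already reads $h_{l,w}^{(1)}(p)=1-h_{w,l}^{(2)}(1-p)$ for all $p\in[0,1]$, so all that remains is to replace the superscripted polynomials by the unsuperscripted $h_{l,w}$ and $h_{w,l}$, which is exactly what Remark \ref{r1} permits once we check that the relevant pairs of dimensions contain an odd entry.

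First I would observe that the hypothesis ``at least one of $l,w$ is odd'' is symmetric in $l$ and $w$, hence it holds simultaneously for the ordered pair $(l,w)$ and for the ordered pair $(w,l)$. Applying Remark \ref{r1} to dimensions $(l,w)$ gives $h_{l,w}^{(1)}=h_{l,w}^{(2)}=h_{l,w}$, and applying it to dimensions $(w,l)$ gives $h_{w,l}^{(1)}=h_{w,l}^{(2)}=h_{w,l}$.

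Then I would substitute $h_{l,w}^{(1)}=h_{l,w}$ and $h_{w,l}^{(2)}=h_{w,l}$ into the identity from Corollary \ref{c2} with $i=1$, obtaining $h_{l,w}(p)=1-h_{w,l}(1-p)$ for every $p\in[0,1]$, which is precisely \eqref{6}. (Taking $i=2$ instead leads to the same conclusion, since the index $2/i$ merely swaps the two kinds and, under the parity hypothesis, both collapse to $h_{w,l}$.)

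I expect no real obstacle: the statement is a purely formal consequence of the two quoted results. The only step that needs a moment's attention is that Remark \ref{r1} must be invoked twice — once for the dimensions $(l,w)$ appearing on the left of \eqref{5} and once for the dimensions $(w,l)$ appearing on the right — which is legitimate precisely because the ``some dimension is odd'' condition is symmetric in its two arguments.
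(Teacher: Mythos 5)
Your argument is correct and is exactly the paper's route: the corollary is obtained by combining Corollary \ref{c2} with Remark \ref{r1}, using that the parity hypothesis is symmetric so the superscripts can be dropped on both $h_{l,w}$ and $h_{w,l}$. Your only addition is making explicit the (harmless) double invocation of Remark \ref{r1} for the pairs $(l,w)$ and $(w,l)$, which the paper leaves implicit.
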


For $l\ne w$ this means that the plots of the reliability polynomials $h_{l,w} (p)$ and $h_{w,l} (p)$ are symmetric one to each other with respect to the point $\left({\frac{1}{2}} ,{\frac{1}{2}} \right)$ (see Fig. \ref{Fig 9}).

 \begin{figure}[h]\centering
\scalebox{0.75}
{
\begin{tikzpicture}
\begin{axis}[
    axis lines = left,
    xlabel = $p$,
    ylabel = $h(p)$,
		xtick={0,0.5,1},
    ytick={0,0.5,1},
		legend pos=south east,
]
\addplot [
		domain=0:1, 
    samples=100, 
    color=red,
		ultra thick
]
{5*x^2 - 4*x^3-3*x^4+4*x^5 - x^6};
\addlegendentry{$h_{2,3}(p)$}
\addplot [
    domain=0:1, 
    samples=100, 
    color=blue,
		ultra thick
    ]
    {4*x^3-2*x^4-2*x^5 + x^6};
\addlegendentry{$h_{3,2}(p)$}
 \addplot[
    color=black,
    mark=x,
    ]
    coordinates {
    (0.5,0.5)
    };
		\draw [dashed,ultra thin] (0,0.5)-- (0.5,0.5);
		\draw [dashed,ultra thin] (0.5,0.5)-- (0.5,0);
				\draw (0,0)--(1,1);
\end{axis}
\end{tikzpicture}
}
\caption{The plots of $h_{l,w} (p)$ and $h_{w,l} (p)$ when at least one dimension is odd.}
\label{Fig 9}
\end{figure}
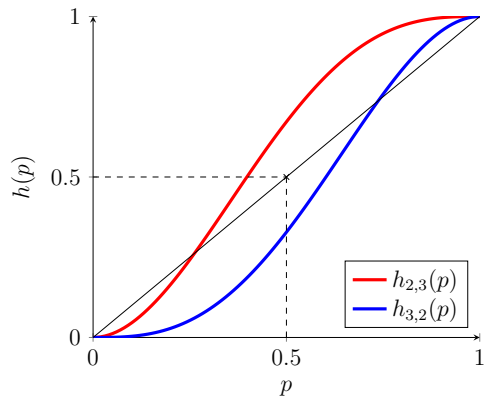

For $l=w=2k+1$ it means that the point $\left({\tfrac{1}{2}} ,{\tfrac{1}{2}} \right)$ is a center of symmetry for the plot of the reliability polynomial $h_{2k+1,2k+1} (p)$ (see Fig. \ref{Fig 10}).

 \begin{figure}[h]\centering
\scalebox{0.75}
{
\begin{tikzpicture}
\begin{axis}[
    axis lines = left,
    xlabel = $p$,
    ylabel = $h(p)$,
		xtick={0,0.5,1},
    ytick={0,0.5,1},
		legend pos=south east,
]
\addplot [
		domain=0:1, 
    samples=100, 
    color=red,
		ultra thick
]
{8*x^3-6*x^4-6*x^5+12*x^7-9*x^8+2*x^9};
\addlegendentry{$h_{3,3}(p)$}

 \addplot[
    color=black,
    mark=x,
    ]
    coordinates {
    (0.5,0.5)
    };
		\draw [dashed,ultra thin] (0,0.5)-- (0.5,0.5);
		\draw [dashed,ultra thin] (0.5,0.5)-- (0.5,0);
		\draw (0,0)--(1,1);
\end{axis}

\end{tikzpicture}
}
\caption{The plot of $h_{2k+1,2k+1}(p)$}
	\label{Fig 10}
\end{figure}
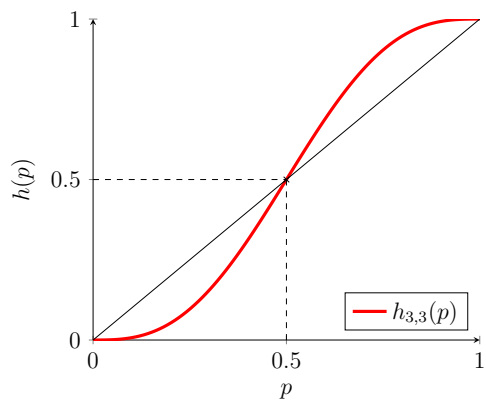

\begin{corollary}\label{c4} Let $h(p)=h_{l,w}^{(i)} (p)$ be the reliability polynomial of a hammock network of dimensions $(l,w)$, either of kind 1 or 2. Then the derivatives of $h$ verify the following relations:
\begin{equation} \label{7} h^{(k)} (0)=0,  \forall k=0,1,\ldots ,l-1 \end{equation} 
\begin{equation} \label{8} h(1)=1, \; h^{(k)} (1)=0,  \forall k=1,2,\ldots ,w-1 \end{equation} 
\end{corollary}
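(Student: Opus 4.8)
The strategy is to read off both groups of relations from the orders of vanishing of the polynomial $h$ at $p=0$ and at $p=1$, using the two expansions in \eqref{3} together with the facts that the shortest pathset of $H_{l,w}^{(i)}$ has exactly $l$ edges and, dually, the smallest cutset has exactly $w$ edges.

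I would first prove \eqref{7}. Write $h(p)=h_{l,w}^{(i)}(p)=\sum_{j}N_{j}\,p^{j}(1-p)^{lw-j}$ as in \eqref{3}, where $N_{j}$ is the number of pathsets of $H_{l,w}^{(i)}$ with $j$ edges. Every step of an \textbf{X}--path changes the first coordinate by $\pm 1$, and along an \textbf{X}--path from a source node $A_{0,y}$ to a terminus node $A_{l,z}$ the first coordinate changes by $l$ in total; hence such a path, and therefore any pathset, contains at least $l$ edges. Conversely, an \textbf{X}--path of exactly $l$ edges exists: starting from a suitable source node it zig--zags between two consecutive horizontal lines $y=c$ and $y=c+1$ (with $0\le c\le w-1$), increasing the first coordinate by $1$ at every step, and it terminates at a terminus node; this is a minpath of size $l$. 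Thus $N_{j}=0$ for $j<l$ and $N_{l}\ge 1$, and since only the $j=l$ summand contributes to the coefficient of $p^{l}$ we get $h(p)=N_{l}\,p^{l}+O(p^{l+1})$, whence $h^{(k)}(0)=0$ for $k=0,1,\dots ,l-1$ (and in fact $h^{(l)}(0)=l!\,N_{l}\neq 0$). This is \eqref{7}.

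I would then deduce \eqref{8} from \eqref{7} and Corollary \ref{c2}. By \eqref{5}, the polynomial identity $1-h_{l,w}^{(i)}(p)=h_{w,l}^{(2/i)}(1-p)$ holds. The right-hand side is the reliability polynomial of the hammock network of dimensions $(w,l)$ and kind $2/i$; applying \eqref{7}, already established, with the roles of $l$ and $w$ interchanged, the polynomial $t\mapsto h_{w,l}^{(2/i)}(t)$ vanishes to order $w$ at $t=0$, i.e. $\bigl(h_{w,l}^{(2/i)}\bigr)^{(k)}(0)=0$ for $k=0,1,\dots ,w-1$. Substituting $t=1-p$ shows that the $k$-th derivative of $1-h_{l,w}^{(i)}$ at $p=1$ equals $(-1)^{k}\bigl(h_{w,l}^{(2/i)}\bigr)^{(k)}(0)=0$ for those same $k$; the case $k=0$ yields $h(1)=1$ and the cases $1\le k\le w-1$ yield $h^{(k)}(1)=0$, which is \eqref{8}. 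Alternatively, one may argue directly from the cutset form in \eqref{3}: by Corollary \ref{c1} together with Remark \ref{r2}, the cutsets of $H_{l,w}^{(i)}$ are precisely the complements of the pathsets of $\overline{H_{l,w}^{(i)}}\cong H_{w,l}^{(2/i)}$, so the smallest cutset of $H_{l,w}^{(i)}$ has $w$ edges, and $1-h(p)$ vanishes to order $w$ at $p=1$.

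The only point I expect to require some care is the claim $N_{l}\ge 1$, that is, the existence of a pathset with exactly $l$ edges: one must check that the zig--zag \textbf{X}--path above actually joins an admissible source node to an admissible terminus node of $H_{l,w}^{(i)}$ for both kinds $i\in\{1,2\}$ and every parity combination of $l$ and $w$ — the starting horizontal level $c$ (or $c+1$) has to be chosen to satisfy the parity constraint $x+y\equiv i-1\pmod 2$, which forces a small case distinction, e.g. when $w=1$ and $i=2$ the path must start at $A_{0,1}$ rather than $A_{0,0}$. All remaining steps are a routine extraction of the order of vanishing from \eqref{3}, so I anticipate no further obstacle.
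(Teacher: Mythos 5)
Your proposal is correct and follows essentially the same route as the paper: establish \eqref{7} from the fact that every pathset has at least $l$ edges (so every term in the pathset expansion \eqref{3} carries a factor $p^{l}$), then obtain \eqref{8} by applying \eqref{7} to the dual/transposed network of dimensions $(w,l)$ and transferring through the identity $h_{l,w}^{(i)}(p)=1-h_{w,l}^{(2/i)}(1-p)$ of Corollary \ref{c2}. Your extra verification that $N_{l}\ge 1$ is not needed for the statement (the corollary asserts only vanishing, not that $h^{(l)}(0)\neq 0$), but it does no harm.
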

\begin{proof}
Since any pathset of a hammock network has at least \textit{l} edges, by equation \eqref{1}, we have:
\begin{equation} \label{9} h(p)=\sum _{i=l}^{wl}N_{i} p^{i} (1-p)^{wl-i}  =\sum _{i=l}^{wl}b_{i} p^{i}   \end{equation} 
and relation \eqref{7}  follows immediately. 

Let $\overline{h}(p)=h_{w,l}^{(2/i)} (p)$ be the reliability polynomial of the dual network, Since \textit{w} is the length of the dual network, it follows by \eqref{7} that $\overline{h}^{(k)}(0)=0,\; \forall k=0,1,\ldots ,w-1$.  By Corollary \ref{c2} we have that $h(p)=1-\overline{h}(1-p)$, and it follows that $h(1)=1$ and  $h^{(k)} (p)=(-1)^{k+1} \overline{h}^{(k)} (1-p)$ for all $k\ge 1$. For $p=1$ we obtain $h^{(k)} (1)=(-1)^{k+1} \overline{h}^{(k)}(0)=0,\; \forall k=1,2,\ldots ,w-1$.
\end{proof}

\section*{Acknowledgments} This research was funded by the European Union through the  European Regional Development Fund under the Competitiveness Operational Program (BioCell-NanoART = Novel Bio-inspired Cellular Nano-architectures, POC-A1-A1.1.4-E nr. 30/2016).





\end{document}